\documentclass[12pt,reqno]{amsart}
\usepackage[T1]{fontenc}
\usepackage{amsfonts}
\usepackage{amssymb}
\usepackage{mathrsfs}
\usepackage[latin1]{inputenc}
\usepackage{amsmath}
\usepackage[english]{babel}
\usepackage{setspace}
\usepackage{latexsym,amsfonts,amsmath,amssymb,soul}

\textheight 8.8in \textwidth 6 in \voffset -0.3in \hoffset -0.4in

 \newtheorem{thm}{Theorem}[section]
 \newtheorem{cor}[thm]{Corollary}
 \newtheorem{lem}[thm]{Lemma}
 \newtheorem{prop}[thm]{Proposition}
 \theoremstyle{definition}
 
 \newtheorem{rem}[thm]{Remark}
 \newtheorem{ex}[thm]{Examples}
 \numberwithin{equation}{section}

 \numberwithin{equation}{section}

\newcommand{\R}{{\mathbb R}}
\newcommand{\D}{{\mathbb D}}

\newcommand{\C}{{\mathbb C}}
\newcommand{\N}{{\mathbb N}}
\newcommand{\T}{{\mathbb T}}
\newcommand{\cD}{{\mathcal D}}
\newcommand{\cL}{{\mathcal L}}
\newcommand{\cF}{{\mathcal F}}
\newcommand{\cE}{{\mathcal E}}

\newcommand{\cC}{{\mathcal C}}

\newcommand{\cM}{{\mathcal M}}

\newcommand{\cS}{{\mathcal S}}
\newcommand{\cO}{{\mathcal O}}

\newcommand{\su}{\subseteq}

\newcommand\proj{\mathop{\rm proj\,}}
\newcommand\ind{\mathop{\rm ind\,}}
\newcommand\Ker{\mathop{\rm Ker}}

\begin{document}

%
%
%
%

\title[Spectra and ergodic properties]
 { Spectra and ergodic properties of multiplication and convolution operators on the space  $\cS(\R)$}

\author[A.A. Albanese, C. Mele]{Angela\,A. Albanese and Claudio Mele}

\address{ Angela A. Albanese\\
Dipartimento di Matematica e Fisica ``E. De Giorgi''\\
Universit\`a del Salento- C.P.193\\
I-73100 Lecce, Italy}
\email{angela.albanese@unisalento.it}

\address{Claudio Mele\\
	Dipartimento di Matematica e Fisica ``E. De Giorgi''\\
	Universit\`a del Salento- C.P.193\\
	I-73100 Lecce, Italy}
\email{claudio.mele1@unisalento.it}

\thanks{\textit{Mathematics Subject Classification 2020:}
Primary 47B38, 46E10, 46F05; Secondary   47A10, 47A35.}
\keywords{Rapidly decreasing functions, multiplication operator, convolution operator, spectra, power bounded operator, mean ergodic operator}




\begin{abstract} In this paper we investigate the spectra and the ergodic  properties of the multiplication operators and the convolution  operators
	acting on the Schwartz space $\cS(\R)$ of rapidly decreasing functions, i.e., operators of
	the form $M_h: \cS(\R)\to\cS(\R)$, $f  \mapsto  h f $, and $C_T\colon \cS(\R)\to\cS(\R)$, $f\mapsto T\star f$. Precisely, we determine their spectra and   characterize when those operators are power bounded and mean ergodic. 
\end{abstract}

\maketitle
\section{Introduction }\label{intro}
Convolution  operators, as well as multiplication operators, have been  intensively studied 
  in spaces of  functions or distributions, from different point of views. For instance, the problem 
  to characterize when the multiplication operator acting on smooth functions has closed range has attracted the attention of several authors (see \cite{BoFrJo} and the references therein) and is still open.  The problem is 
  equivalent to the well-known  division problem for distributions posed by L. Schwartz \cite[Chap. 5, Section 5]{Sh}.
  However, it seems that power boundedness
  and (uniform) mean ergodicity of multiplication and convolution operators on the  Schwartz space $\cS(\R^N)$
  of rapidly decreasing functions  had not been
  investigated. 
  
  The Schwartz space $\cS(\R^N)$
  of rapidly decreasing functions is the most important space of classical analysis besides the space of smooth functions and the space of real analytic functions.
  The   multipliers of $\cS(\R^N)$ are the functions
  $h\in C^\infty(\R^N)$ such that the multiplication operator $M_h\colon \cS(\R^N)\mapsto \cS(\R^N)$, $f\mapsto hf$,
  is well defined and continuous. The space of all multipliers is denoted by $\cO_M(\R^N)$.  The convolutors of  $\cS(\R^N)$ are the distributions $T\in \cS'(\R^N)$ such that the convolution operator $C_T\colon \cS(\R^N)\to \cS(\R^N)$, $f\mapsto T\star f$, is well defined and continuous. The space of all convolutors is denoted by $\cO'_C(\R^N)$. In \cite{BoFrJo} the authors  characterized
  the multipliers $h\in \cO_M(\R)$ such that $M_h\colon \cS(\R)\to \cS(\R)$
  has closed range. We also mention that in the last years  the study of the properties, like closed range and dynamical behaviour, of the 
    composition operators acting 
  on  the  Schwartz space $\cS(\R)$  has been considered by 
  several authors (see  \cite{FeGaJo,FeGaJo-2,GaJo,GoPr} for examples and the references therein).
  
  In this paper we study the spectra and the ergodic properties of the multiplication and the convolution operators defined in the Schwartz space
  $\cS(\R)$ of one variable rapidly decreasing functions. We determine their spectra and characterize when those operators are power bounded and (uniformly) mean ergodic. In
  particular, we show that the spectra of the  multiplication operator $M_h$ (the convolution operator $C_T$, resp.) acting either on $\cS(\R)$ or on $\cO_M(\R)$ (on $\cO_C(\R)$, the strong dual of $\cO_C'(\R)$, resp.) coincide, Theorem  \ref{T.Spectra} and \ref{T.SpectraC}. We prove that
   the multiplication operator $M_h$ is power bounded (uniformly mean ergodic, resp.) when it acts on $\cS(\R)$ if and only if it is power bounded (uniformly mean ergodic, resp.) when it acts on $\cO_M(\R)$, Theorems \ref{T.PowerB} and \ref{T.MeanERg}.  These conditions are expressed in terms  of the multiplier $h$.  Similar characterizations are also given in the case of convolution operators. The properties of the Fourier transfom allows to reduce the proofs to the multiplication operator case. Precisely, we show that the convolution operator $C_T$ is power bounded (uniformly mean ergodic, resp.) when it acts on $\cS(\R)$ if and only if  it is power bounded (uniformly mean ergodic, resp.) when it acts on $\cO_C(\R)$, Propositions \ref{P.PBC} and \ref{P.PBM}. These conditions are expressed in terms  of the convolutor $T$.
   
   We also present characterizations of the power boundedness and the (uniform) mean ergodicity of the multiplication operators and the convolution operators when those  act on the space $C^\infty(\R)$, Propositions \ref{P.CInftyM} and \ref{P.CInftyC}. 
   
   The paper ends with an Appendix, where we collect some general  results on the spectrum of operators acting on Fr\'echet spaces.
  
  We remark that the description of the spectra and  the characterizations of the power boundedness and the mean ergodicity   are valid for
  the several variables case with the same proofs. We have considered only the one dimensional case for the sake of simplicity in computations of derivatives to describe the spectra.

\section{Preliminaries}

In this section, we first recall some general notation and results on operators in locally convex spaces.

Let $E$ be a locally convex Hausdorff space (briefly, lcHs) and let $\cL(E)$ denote the space of all continuous linear operators from $E$ into itself. Given $T\in \cL(E)$, the \textit{resolvent set} of $T$ is defined by 
\[
\rho(T):=\{\lambda \in \C\colon \lambda I-T\colon E\to E \ {\rm is\ bijective\ and \ } (\lambda I-T)^{-1}\in \cL(E)\}
\]
and the \textit{spectrum} of $T$ is defined by $\sigma(T):=\C\setminus \rho(T)$. For $\lambda\in \rho(T)$ we define $R(\lambda,T):=(\lambda I-T)^{-1}$ which is called the \textit{resolvent operator of $T$ at $\lambda$}. The \textit{point spectrum} is defined by 
\[
\sigma_p(T):=\{\lambda\in\C\colon \lambda I-T {\rm \ is\ not \ injective}\}. 
\]
Whenever $\lambda,\mu\in\rho(T)$ we have the \textit{resolvent identity} $R(\lambda,T)-R(\mu,T)=(\mu-\lambda)R(\lambda,T)R(\mu,T)$. Unlike for Banach spaces, it may happens that $\rho(T)=\emptyset$ or that $\rho(T)$ is not open in $\C$ (see, f.i., \cite{ABR-3}). This is the reason for which many authors consider the subset $\rho^*(T)$ of $\rho(T)$ consisting of all $\lambda\in\C$ for which there exists $\delta>0$ such that $B(\lambda,\delta):=\{\mu\in\C\colon |\mu-\lambda|<\delta\}\subseteq \rho(T)$ and the set $\{R(\mu,T)\colon \mu\in B(\lambda,\delta)\}$ is equicontinuous in $\cL(E)$. If $E$ is a Fr\'echet space, then it
suffices that this set is bounded in $\cL_s (E)$, where $\cL_s(E)$  denotes $\cL(E)$ endowed with the
strong operator topology, i.e., the topology of uniform convergence on the finite subsets of $E$. The advantange of $\rho^*(T)$, whenever it is not empty, is that it is open and the resolvent map $R\colon \lambda\mapsto R(\lambda,T)$ is holomorphic from $\rho^*(T)$ into $\cL_b(E)$ (see, f.i., \cite[Propositione 3.4]{ABR}), where $\cL_b(E)$ denotes $\cL(E)$ endowed with the topology of uniform convergence on the
bounded subsets of $E$. Define $\sigma^*(T) := \C\setminus\rho^*(T)$,
which is a closed set containing $\sigma(T)$. In \cite[Remark 3.5(vi)]{ABR} an example of an operator $T\in \cL(E)$,
with $E$ a Fr\'echet space, is presented such that $\overline{\sigma(T)}\subsetneq \sigma^*(T)$.

For further basic properties of the resolvent set and the resolvent map we refer to
\cite{Va,W} for operators on locally convex spaces.

An operator $T\in \cL(E)$, with $E$ a lcHs, is called \textit{power bounded} if $\{T^n\}_{n\in\N}$ is an
equicontinuous subset of $\cL(E)$.

The  Ces\`aro means of an operators $T\in\cL(E)$, with $E$ a lcHs, are defined by
\[
T_{[n]} :=\frac{1}{n}\sum_{m=1}^nT^m,\quad n\in\N.
\]
The operator  $T$ is called \textit{mean ergodic} (resp. \textit{uniformly mean ergodic}) if $\{T_{[n]}\}_{n\in\N}$ 
is a convergent
sequence in $\cL_s(E)$ (resp. in $\cL_b(E)$). 
The  Ces\`aro means of $T$ satisfies the following identities
\[ 
\frac{T^n}{n}= T_{[n]}-\frac{n-1}{n} T_{[n-1]}, \quad  n \geq 2.
\] 
So, it is clear that $\frac{T^n}{n}\to 0$ in $\cL_s(E)$ as $n\to\infty$, whenever $T$ is mean ergodic.
Furthemore, if $E$ is a barrelled lcHs space and $T\in\cL(E)$ is mean ergodic, the operator $P:=\lim_{n\to\infty}T_{[n]}$ in $\cL_s(E)$ is a projection on $E$ satisfying ${\rm Im}\, P = \ker(I- T)$ and $\ker P =\overline{{\rm Im}\,(I-T)}$ with
\[
X = \overline{{\rm Im}\,(I-T)}\oplus \ker(I-T).
\] 
If  $E$ is a Montel lcHs, i.e., a barrelled lcHs such that every bounded set is relatively compact, then the operator $T$ is uniformly mean ergodic whenever it is mean ergodic. Furthemore, in reflexive Fr\'echet spaces (in Montel Fr\'echet spaces, resp.) every power
bounded operator is necessarily mean ergodic (uniformly mean ergodic, resp.) \cite[Corollary 2.7, Proposition 2.9]{ABR-0}. The converse is not true in general, see, f.i., \cite[\S 6]{H}.

For further results on mean ergodic operators we refer to \cite{Kr,Y}. For recent results on mean ergodic operators in
lcHs' we refer to \cite{ABR-0,ABR-J,ABR-JJ,ABR,ABR-2,P}, for example, and the references therein.

We now recall the necessary definitions and some basic properties of the space $\cS(\R)$ and the spaces $\cO_M(\R)$ and $\cO_C(\R)$.

The space $\cS(\R)$ of rapidly decreasing functions on $\R$ is defined by 
 \[
 \cS(\R)=\{f\in C^\infty(\R)\colon ||f||_n:=\sup_{x\in\R}\sup_{i=0,\ldots, n}(1+x^2)^n|f^{(i)}(x)|<+\infty {\rm \ for \ every\ } n\in\N_0\}.
 \]
The space $\cS(\R)$ is a nuclear Fr\'echet space and hence, it is Montel  and reflexive. Accordingly, its strong dual $\cS'(\R)$ is  a nuclear lcHs. In particular, $\cS'(\R)$ is barrelled and bornological lcHs. 

The space  of multipliers $\cO_M(\R)$  of slowly increasing functions on $\R$ is given by
\[
\cO_M(\R)=\cap_{m=1}^\infty\cup_{n=1}^\infty\{f\in C^\infty(\R)\colon |f|_{m,n}:=\sup_{x\in\R}\sup_{0\leq i\leq m}(1+x^2)^{-n}|f^{(i)}(x)|<\infty\},
\]
where $\cO_{n}^m(\R):=\{f\in C^\infty(\R)\colon |f|_{m,n}:=\sup_{x\in\R}\sup_{0\leq i\leq m}(1+x^2)^{-n}|f^{(i)}(x)|<\infty\}$, endowed with the norm $|\cdot |_{m,n}$, is a Banach space for any $m,n\in\N$. The elements of   $\cO_M(\R)$ are called   slowly increasing functions on $\R$. 
The space $\cO_M(\R)$, endowed with its natural lc-topology, i.e., $\cO_M(\R) =\proj_{\stackrel{m}{\leftarrow}}\ind_{\stackrel{n}{\to}}\cO_{n}^m(\R)$, is a projective limit of complete (LB)-spaces. In particular, $\cO_M(\R)$ is a bornological nuclear lcHs (hence, Montel and reflexive), see \cite{Gro}. Its strong dual $\cO'_M(\R)$ is also a bornological nuclear lcHs.
Furthemore, a fundamental system of continuous norms on $\cO_M(\R)$ is given by
\[
p_{m,v}(f)=\sup_{x\in\R}\sup_{0\leq i\leq m}|v(x)||f^{(i)}(x)|,\quad f\in \cO_M(\R),
\]
where $v\in \cS(\R)$ and $m\in\N$ (see, f.i., \cite{Ch}). Since $(\cO_M(\R), \cdot)$ is an algebra, we have then  for every $m\in\N$ and $v\in \cS(\R)$ that there exist $v_1,v_2\in\cS(\R)$ and $m'\in\N$ with $m'\geq m$ such that
\begin{equation}\label{eq.Prod}
p_{m,v}(fg)\leq p_{m',v_1}(f)p_{m',v_2}(g),\quad f,g\in\cO_M(\R).
\end{equation} 
The space  $\cO_C(\R)$ of  very slowly increasing functions on $\R$ is given by
\[
\cO_C(\R)=\cup_{n=1}^\infty\cap_{m=1}^\infty\{f\in C^\infty(\R)\colon |f|_{m,n}:=\sup_{x\in\R}\sup_{0\leq i\leq m}(1+x^2)^{-n}|f^{(i)}(x)|<\infty\}.
\]
The space $\cO_C(\R)$, endowed with its natural lc-topology, i.e., $\cO_C(\R) =\ind_{\stackrel{n}{\to}}\proj_{\stackrel{m}{\leftarrow}}\cO_{n}^m(\R)$, is a  complete (LF)-space. In particular, $\cO_C(\R)$ is a bornological nuclear lcHs (hence, Montel and reflexive), see \cite{Gro}. Its strong dual $\cO'_C(\R)$ is the space of all convolutors of $\cS(\R)$. In particular,  $\cO'_C(\R)$  is also a bornological nuclear lcHs.

The space $\cO_M(\R)$ is the space of multipliers of $\cS(\R)$ and its strong dual $\cS'(\R)$. So, for any fixed $h\in\cO_M(\R)$, the multiplication operator $M_h\colon \cS(\R)\to \cS(\R)$, $f\mapsto hf$, is continuous and hence, its transpose $\cM_h:=M_h'\colon \cS'(\R)\to \cS'(\R)$, $S\mapsto hS$, is also continuous. Furthemore, for any $h\in\cO_M(\R)$, the multiplication operator $M_h$ ($\cM_h$, resp.) acts continuosly from $\cO_M(\R)$ into itself (from $\cO'_M(\R)$ into itself, resp.).

The space $\cO_C'(\R)$  is the space of convolutors of $\cS(\R)$ and its strong dual $\cS'(\R)$. Precisely, 
if $T\in \cO'_C(\R)$ and $f\in\cS(\R)$, then the convolution $T\star f$ defined by 
\[
(T\star f)(x):=\langle T_y,\check{\tau_x f}\rangle, \quad x\in\R,
\]
is a function of $\cS(\R)$, where  $(\tau_xf)(y):=f(x+y)$ for $x,y\in\R$ and $\check{f}(y):=f(-y)$ for $y\in\R$. While, if $T\in \cO'_C(\R)$ and $S\in \cS'(\R)$, then the convolution $T\star S$ defined by 
\[
(T\star S)(f)=\langle S, \check{T}\star f\rangle,\quad f\in\cS(\R),
\]
belongs to $\cS'(\R)$, where $\check{T}$ denotes  the distribution defined by $\varphi\mapsto \langle \check{T}, \varphi\rangle:=\langle T, \check{\varphi}\rangle$. We point out that in case $S$ belongs to $\in \cO_C'(\R)$, the convolution  $T\star S\in \cO'_C(\R)$ too. 

For any fixed $T\in \cO'_C(\R)$,  the convolution  operator $C_T\colon \cS(\R)\to \cS(\R)$ (the convolution operator $C_T\colon \cO_C(\R)\to \cO_C(\R)$, resp.), $f\mapsto T\star f$, is a continuous linear operator from $\cS(\R)$  into itself (from $\cO_C(\R)$ into itself, resp.) and hence, its transpose $\cC_T:=C_T'\colon \cS'(\R)\to \cS'(\R)$ ($\cC_T:=C_T'\colon \cO_C'(\R)\to \cO_C'(\R)$, resp.), $S\mapsto T\star S$, is also a continuous linear operator from $\cS'(\R)$ into itself (from $\cO'_C(\R)$ into itself, resp.). 

Through the paper, we consider the  following notation for the Fourier transform of a function $f\in L^1(\R)$:
\[
\hat{f}(\xi):=\cF(f)(\xi)=\int_{\R}e^{-ix\xi}f(x)\,dx, \quad \xi\in\R.
\]
The Fourier transform $\cF\colon \cS(\R)\to \cS(\R)$ is a topological isomorphism from $\cS(\R)$ onto itself, 
that
can be extended in the usual way to $\cS'(\R)$
, i.e. $\cF(T)(f) :=
\langle T, \hat{f}\rangle$ for every $f \in \cS(\R)$ and $ T\in \cS'(\R)$. Furthemore, the Fourier transfom $\cF$ is a topological isomorphism from the space
$\cO_C'(\R)$  onto the space $\cO_M(\R)$. 

We point out that if $T\in \cO'_C(\R)$ and $f\in\cS(\R)$, then the convolution $T\star f$ satisfies the equality
\begin{equation}\label{eq.FT}
\cF(T\star f)=\cF(T)\hat{f}.
\end{equation}
On the other hand, if $T\in \cO'_C(\R)$ and $S\in \cS'(\R)$, then the convolution $T\star S$ satisfies the equality
\begin{equation}\label{eq.FTD}
\cF(T\star S)=\cF(T)\cF(S).
\end{equation}
In particular,  
applying \eqref{eq.FT} we get  for every $f\in\cS(\R)$ that
\begin{equation}\label{eq.FTC}
\cF(C_T(f))=\cF(T\star f)=\cF(T)\hat{f}.
\end{equation}
While, applying \eqref{eq.FTD} we get for every $S\in \cO'_C(\R)$ that 
\begin{equation}\label{eq.FTDN}
\cF(\cC_T(S))=\cF(T\star S)=\cF(T)\cF(S).
\end{equation}
If we consider the multiplication operator $M_{\cF(T)}\colon \cS(\R)\to \cS(\R)$ and the Fourier transfom $\cF\colon \cS(\R)\to \cS(\R)$ ($M_{\cF(T)}\colon \cO_M(\R)\to \cO_M(\R)$ and $\cF\colon \cO'_C(\R)\to \cO_M(\R)$, resp.), identity \eqref{eq.FTC} (\eqref{eq.FTDN}, resp.) means  that 
\begin{equation}\label{eq.Compo1}
\cF\circ C_T=M_{\cF(T)}\circ \cF\quad (\cF\circ \cC_T= M_{\cF(T)}\circ \cF, {\rm \ resp.}).
\end{equation}
For further properties on the spaces $\cO_M(\R)$ and $\cO_C(\R)$ and the Fourier transform we refer, for instance, to \cite{Ch,Gro,Ho,Sh} (see also \cite{La,LaWe} and the references therein).

\section{Spectra of Multiplication and Convolution operators on $\cS(\R^N)$}

The aim of this section is to  study the spectra of  multiplication operators $M_h$,  for $h\in \cO_M(\R)$, and convolution operators $C_T$, for $T\in \cO'_C(\R)$, when acting on the space $\cS(\R)$.

To this target, we begin by stating and proving some auxiliary results.

\begin{lem}\label{L.ausF} Consider the function
	\begin{equation}\label{eq.funzione}
	f(x)=\frac{1}{1+x},\quad x\not=-1.
	\end{equation}
	Then $f\in C^\infty(\R\setminus\{-1\})$ and
	\begin{equation}\label{eq.derivatef}
	f^{(n)}(x)=(-1)^n\frac{n!}{(1+x)^{n+1}},\quad n\in\N,\ x\not=-1.
	\end{equation}
\end{lem}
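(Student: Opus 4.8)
The plan is to prove the formula for the $n$-th derivative of $f(x) = \frac{1}{1+x}$ by induction on $n$. This is a completely routine computation, so the main task is organizing the induction cleanly rather than overcoming any genuine obstacle.

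First I would establish the base case. For $n = 1$, a direct differentiation gives $f'(x) = -\frac{1}{(1+x)^2}$, which matches the claimed formula $(-1)^1 \frac{1!}{(1+x)^{1+1}} = -\frac{1}{(1+x)^2}$. The smoothness of $f$ on $\R \setminus \{-1\}$ is immediate since $f$ is a quotient of polynomials whose denominator vanishes only at $x = -1$.

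For the inductive step, I would assume the formula \eqref{eq.derivatef} holds for some $n \in \N$ and differentiate once more. Writing $f^{(n)}(x) = (-1)^n n! (1+x)^{-(n+1)}$ and applying the power rule yields
\[
f^{(n+1)}(x) = (-1)^n n! \cdot (-(n+1))(1+x)^{-(n+2)} = (-1)^{n+1}(n+1)!(1+x)^{-(n+2)},
\]
which is precisely the claimed formula with $n$ replaced by $n+1$. By induction the formula holds for all $n \in \N$, completing the proof.

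There is no hard part here; the lemma is a standard preliminary computation whose only role is to be invoked later when describing the spectra via explicit derivative estimates. The one point worth being careful about is keeping the sign and factorial bookkeeping correct through the inductive step, but the verification is mechanical.
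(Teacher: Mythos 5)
Your proof is correct and follows essentially the same route as the paper: induction on $n$ with the base case $n=1$ computed directly, followed by a single differentiation in the inductive step. The only cosmetic difference is that you apply the power rule to $(1+x)^{-(n+1)}$ while the paper writes the derivative as a quotient; the bookkeeping is identical.
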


\begin{proof} The proof is given by induction. For $n=1$ we have
	\[
	f'(x)=(-1)\frac{1}{(1+x)^2},\quad x\not=-1.
	\]
	Suppose that \eqref{eq.derivatef} is valid for some $n>1$. Then
	\[
	f^{(n+1)}(x)=(-1)^nn!\frac{-(n+1)(1+x)^n}{(1+x)^{2n+2}}=(-1)^{n+1}\frac{(n+1)!}{(1+x)^{n+2}},\quad x\not=-1.
	\]
	So, the proof is complete.
\end{proof}

\begin{lem}\label{eq.ComMul} Let $h\in\cO_M(\R)$. If $(-1)\not\in\overline{{\rm Im}\, h}$, then $f\circ h\in \cO_M(\R)$, where $f$ is the function given in \eqref{eq.funzione}.
\end{lem}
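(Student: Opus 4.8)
The plan is to reduce the claim to a polynomial growth estimate, uniform in the right sense, for every derivative of the composition $g:=f\circ h=\frac{1}{1+h}$, and then to read off membership in $\cO_M(\R)$ directly from its defining seminorms. First I would reformulate the hypothesis quantitatively. Since $h\in\cO_M(\R)\su C^\infty(\R)$ and $(-1)\notin\ov{{\rm Im}\, h}$, the closed set $\ov{{\rm Im}\, h}$ lies at positive distance from $-1$, so there is $\delta>0$ with $|1+h(x)|\geq\delta$ for all $x\in\R$. In particular $1+h$ never vanishes, whence $g=(1+h)^{-1}\in C^\infty(\R)$, and Lemma \ref{L.ausF} yields
\[
|f^{(k)}(h(x))|=\frac{k!}{|1+h(x)|^{k+1}}\leq\frac{k!}{\delta^{k+1}},\qquad k\in\N_0,\ x\in\R,
\]
so the factors contributed by $f$ are bounded uniformly in $x$, by a constant depending only on $k$ and $\delta$.

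Next I would differentiate the composition by Fa\`a di Bruno's formula,
\[
g^{(n)}(x)=\sum\frac{n!}{k_1!\cdots k_n!}\,f^{(k_1+\cdots+k_n)}(h(x))\prod_{j=1}^{n}\left(\frac{h^{(j)}(x)}{j!}\right)^{k_j},
\]
the finite sum running over all $(k_1,\dots,k_n)\in\N_0^n$ with $k_1+2k_2+\cdots+nk_n=n$. Each summand is a product of one bounded factor $f^{(k)}(h)$, where $k=k_1+\cdots+k_n$, and of derivatives $h^{(j)}$ of order $j\leq n$. The constraint $\sum_j jk_j=n$ forces $k=\sum_j k_j\leq n$, so each such product involves at most $n$ factors $h^{(j)}$.

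Now the growth of $h$ enters. Fix $m\in\N$. As $h\in\cO_M(\R)$, there exist $N\in\N$ and $C\geq 1$ with $|h^{(j)}(x)|\leq C(1+x^2)^N$ for all $0\leq j\leq m$ and all $x$. For every $n\leq m$, each product $\prod_j(h^{(j)})^{k_j}$ appearing above has $\sum_j k_j=k\leq n\leq m$ factors, hence is bounded by $C^m(1+x^2)^{mN}$; combining this with the uniform bound on $f^{(k)}(h)$ and the finiteness of the sum produces a constant $C_m>0$ with $|g^{(i)}(x)|\leq C_m(1+x^2)^{mN}$ for all $0\leq i\leq m$ and all $x$ (the case $i=0$ being $|g|\leq\delta^{-1}$). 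Taking $n:=mN$ gives $|g|_{m,mN}<\infty$, i.e.\ $g\in\cO_{mN}^m(\R)$. Since $m$ was arbitrary, $g=f\circ h\in\cap_m\cup_n\cO_n^m(\R)=\cO_M(\R)$.

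The only delicate point is the bookkeeping of the polynomial degree: one must verify that a \emph{single} degree works simultaneously for all derivatives of order $\leq m$, which is exactly what the definition of $\cO_M(\R)$ demands. This is guaranteed because the number of factors $h^{(j)}$ in any Fa\`a di Bruno term for $g^{(i)}$ with $i\leq m$ never exceeds $m$, so the degree stays at $mN$. A purely inductive alternative avoids Fa\`a di Bruno altogether: differentiating the identity $(1+h)g=1$ by the Leibniz rule gives the recursion
\[
g^{(n)}=-\frac{1}{1+h}\sum_{k=1}^{n}\binom{n}{k}h^{(k)}g^{(n-k)},
\]
from which the polynomial bounds propagate by induction on $n$, again using $|1+h|\geq\delta$ at each step.
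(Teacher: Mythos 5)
Your proof is correct and follows essentially the same route as the paper's: a uniform lower bound $|1+h(x)|\geq\delta$ extracted from $(-1)\notin\overline{{\rm Im}\,h}$, the explicit derivative formula of Lemma \ref{L.ausF}, and the Fa\`a di Bruno expansion, with the same key bookkeeping that for each fixed order $m$ a single polynomial degree (your $mN$, the paper's $jl$) bounds all derivatives of order $\leq m$, which is exactly what membership in $\cO_M(\R)$ requires. The Leibniz-rule induction on $(1+h)g=1$ that you sketch at the end is a valid alternative, but your main argument coincides with the paper's.
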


\begin{proof} According to the Fa\`a Bruno formula, we have for every $x\in\R$ and $n\in\N_0$ that 
	\[
	(f\circ h)^{(n)}(x)=\sum\frac{n!}{k_1!k_2!\ldots k_n!}f^{(k)}(h(x))\left(\frac{h'(x)}{1!}\right)^{k_1}\left(\frac{h''(x)}{2!}\right)^{k_2}\ldots \left(\frac{h^{(n)}(x)}{n!}\right)^{k_n},
	\]
	where the sum is extended over all $(k_1,k_2,\ldots,k_n)\in \N_0^n$ such that $k_1+2k_2+\ldots+ nk_n=n$ and $k_1+k_2+\ldots+k_n=k$ (hence, $k_1+k_2+\ldots+k_n\leq n$). Since $(-1)\not\in\overline{{\rm Im}h}$, it clearly follows that $f\circ h\in C^\infty(\R)$. So, it remains to show that $f\circ h\in \cO_M(\R)$. To this end, let choose $d>0$ such that $d<\min\{1,d(-1,\overline{{\rm Im}\,h})\}$ and fix $l\in\N$. Since $h\in \cO_M(\R)$ there exist $C\geq 1$ and $j\in\N$ such that 
	\[
	|h^{(i)}(x)|\leq C(1+x^2)^j,\quad x\in \R,\ i=0,1,\ldots,l.
	\]
	So, we have for every $x\in\R$ and $n=0,1,\ldots,l$ that 
	\begin{align*}
	|(f\circ h)^{(n)}(x)|&\leq \sum\frac{n!}{k_1!k_2!\ldots k_n!}|f^{(k)}(h(x))|\frac{C^k(1+x^2)^{jk}}{(1!)^{k_1}(2!)^{k_2}\ldots (n!)^{k_n}}\\
	&=\sum\frac{n!}{k_1!k_2!\ldots k_n!}\frac{k!}{|1+h(x)|^{k+1}}\frac{C^k(1+x^2)^{jk}}{(1!)^{k_1}(2!)^{k_2}\ldots (n!)^{k_n}}\\
	&\leq \frac{C^n}{d^{n+1}}(1+x^2)^{jn}\sum\frac{n!k!}{k_1!k_2!\ldots k_n!}\frac{1}{(1!)^{k_1}(2!)^{k_2}\ldots (n!)^{k_n}}\\
	&\leq \frac{C^lD}{d^{l+1}}(1+x^2)^{jl},
	\end{align*}
where $D:=\max\{\sum\frac{n!k!}{k_1!k_2!\ldots k_n!}\frac{1}{(1!)^{k_1}(2!)^{k_2}\ldots (n!)^{k_n}}\colon n=0,\ldots,l\}<+\infty$. Since $l\in\N$ is arbitrary, this implies that  $f\circ h\in \cO_M(\R)$.
	\end{proof}

\begin{prop}\label{L.Zero} Let $h\in\cO_M(\R)$. Then the following properties are equivalent.
	\begin{enumerate}
	 \item $M_h\colon \cS(\R)\to \cS(\R)$ is surjective.
	 \item $M_h\colon \cO_M(\R)\to \cO_M(\R)$ is surjective.
	 \item $0\not\in {\rm Im}\, h$ and $\frac{1}{h}\in \cO_M(\R)$.
	 \item  There exist $j\in\N$ and $c>0$ such that $|h(x)|\geq \frac{c}{(1+x^2)^j}$ for every $x\in\R$.
	 \end{enumerate}
\end{prop}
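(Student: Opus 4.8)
The plan is to prove the equivalence by establishing a cycle of implications, with the analytic heart being the passage from surjectivity to the lower bound in $(4)$, and the two preliminary lemmas doing the work of going back from the pointwise bound to membership in $\cO_M(\R)$.

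First I would prove $(4)\Rightarrow(3)$, which is essentially immediate: the bound $|h(x)|\ge c(1+x^2)^{-j}$ forces $0\notin{\rm Im}\,h$, and it shows $1/h$ is bounded by a polynomial. To see $1/h\in\cO_M(\R)$ I would apply the Fa\`a di Bruno estimate exactly as in Lemma~\ref{eq.ComMul}, but with the composition of $h$ with $t\mapsto 1/t$ rather than $t\mapsto 1/(1+t)$; since $h$ stays away from $0$ with a polynomially controlled reciprocal, each derivative of $1/h$ is again polynomially bounded, so $1/h$ is a multiplier. Next, $(3)\Rightarrow(1)$ and $(3)\Rightarrow(2)$ are trivial: if $1/h\in\cO_M(\R)$ then $M_{1/h}$ is a continuous inverse of $M_h$ on both $\cS(\R)$ and $\cO_M(\R)$, so $M_h$ is bijective, in particular surjective, in either setting.

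The substantive direction is to deduce $(4)$ from surjectivity. I would treat $(1)\Rightarrow(4)$ and $(2)\Rightarrow(4)$ together, arguing the contrapositive: if no such $j,c$ exist, then for every $j$ and every $c>0$ there are points $x$ where $|h(x)|(1+x^2)^j$ is arbitrarily small, so one can select a sequence $x_k\to\infty$ (or clustering where $h$ nearly vanishes) along which $|h(x_k)|(1+x_k^2)^k\to 0$ very fast. The idea is to build a target function $g\in\cS(\R)$ (or in $\cO_M(\R)$) that cannot be written as $hf$ with $f$ in the respective space: one chooses $g$ supported near the points $x_k$ with prescribed, slowly-decaying values, so that any putative solution $f=g/h$ would be forced to grow faster than any Schwartz (resp.\ slowly increasing) function permits, contradicting $f\in\cS(\R)$ (resp.\ $f\in\cO_M(\R)$). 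Concretely, since the seminorms on $\cS(\R)$ demand decay faster than every inverse polynomial and those on $\cO_M(\R)$ allow at most polynomial growth, a violation of $(4)$ lets $g/h$ escape the corresponding seminorm control.

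The main obstacle is this last construction, and in particular controlling the derivatives, not merely the values, of the candidate $g$ and of the quotient $g/h$: the failure of $(4)$ only gives information about $|h|$ at isolated points, whereas membership in $\cS(\R)$ or $\cO_M(\R)$ is a statement about all derivatives on all of $\R$. I expect the clean route is to localize using bump functions concentrated where $h$ is small and to exploit that a multiplier $h$ has polynomially bounded derivatives, so that smallness of $|h|$ at a point propagates to a controlled neighborhood; this turns the pointwise failure into a genuine obstruction to solving $M_hf=g$ continuously. An alternative, possibly cleaner, strategy for $(1)\Rightarrow(4)$ and $(2)\Rightarrow(4)$ is to invoke surjectivity together with the open mapping theorem (valid since $\cS(\R)$ is Fr\'echet and, for $\cO_M(\R)$, using its known barrelledness and the closed graph/open mapping machinery for the relevant class), extracting a quantitative estimate $\|f\|\le C\|hf\|$ on some seminorm, and then feeding in suitably chosen test functions concentrated near the problematic points to read off the lower bound $(4)$ directly; I would attempt this route first, as it sidesteps the explicit construction of a non-attained target.
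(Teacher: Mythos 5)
Your proposal is correct in substance, but it routes the hard implications differently from the paper, and your preferred variant of that route is the shaky one. The paper never constructs a function outside the range. For (1)$\Rightarrow$(3) it argues: if $h(x_0)=0$, pick $g\in\cD(\R)$ with $g(x_0)=1$; any $f$ with $hf=g$ gives $0=h(x_0)f(x_0)=g(x_0)=1$, so surjectivity forces $0\notin{\rm Im}\,h$. Once $h$ is zero-free, every solution of $hf=g$ is forced to equal $g/h$, so surjectivity says precisely that $\frac{1}{h}g\in\cS(\R)$ for all $g\in\cS(\R)$; by the classical fact (recalled in the preliminaries) that $\cO_M(\R)$ \emph{is} the multiplier space of $\cS(\R)$, this already yields $\frac{1}{h}\in\cO_M(\R)$, with no estimates at all. (The paper also quotes \cite{BoFrJo} to get injectivity from closed range, but notes this is not needed.) For (2)$\Rightarrow$(3) the paper is even shorter: apply surjectivity on $\cO_M(\R)$ to the constant function $\mathbf{1}$, getting $f=\frac{1}{h}\in\cO_M(\R)$ directly. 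Then (3)$\Leftrightarrow$(4) is the Fa\`a di Bruno computation, exactly as in your plan. Your contrapositive construction $\neg(4)\Rightarrow\neg(1)$ does work, and the case split you hint at is the right one: choose $x_k$ with $|h(x_k)|<\frac{1}{k}(1+x_k^2)^{-k}$; if the $x_k$ cluster, continuity gives a zero of $h$ and the bump argument applies; if $|x_k|\to\infty$, the target $g=\sum_k(1+x_k^2)^{-k}\rho(x-x_k)$ is in $\cS(\R)$ (disjoint bumps --- the same construction the paper itself uses in the proof of Theorem \ref{T.PowerB}, citing \cite{Ho}), and the unique candidate solution $f=g/h$ satisfies $|f(x_k)|>k$, contradicting boundedness of Schwartz functions. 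Note that your main worry is unfounded: no derivative control of $g/h$ is needed, because once $h$ is zero-free the solution is unique and violating the zeroth-order bound at the points $x_k$ already expels it from the space. One genuine detail you gloss over: for $\neg(4)\Rightarrow\neg(2)$ the slowly-decaying-values target is useless, since $|f(x_k)|>k$ does not contradict polynomial growth; there you should take $g=\mathbf{1}$ (or bumps with constant height), which gives $|f(x_k)|>k(1+x_k^2)^k$ and beats every polynomial --- and at that point you have rediscovered the paper's one-line argument.

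The route you say you would attempt first, via the open mapping theorem, has real gaps. First, an estimate of the form $\|f\|\le C\|hf\|$ presupposes uniqueness of preimages, so you must rule out zeros of $h$ beforehand anyway; but once $h$ is zero-free, the multiplier argument above finishes the proof with no estimate. Second, for $\cO_M(\R)$ barrelledness is not the relevant hypothesis: $\cO_M(\R)$ is not Fr\'echet, and an open mapping theorem there requires De Wilde's machinery (it does apply, since $\cO_M(\R)$ is webbed and, being complete and bornological, ultrabornological), which is much heavier than the problem requires. Third, extracting (4) from such an estimate by feeding in bumps concentrated at $x_k$ requires an upper bound for $|h|$ on a whole neighborhood of $x_k$, not just at the point; this can be arranged via the polynomial bounds on $h'$, but the trade-off between the width of the bump and the size of its derivatives is exactly the delicacy you were hoping to avoid. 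In short: keep your explicit construction (or, better, the paper's multiplier shortcut), and drop the open-mapping alternative.
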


\begin{proof} (3)$\Rightarrow$(1) and  (3)$\Rightarrow$(2) are obviuos.
	
(1)$\Rightarrow$(3).	
%
	Since $M_h$ is surjective, its range ${\rm Im}\, M_h=\cS(\R)$ is clearly a closed subspace of $\cS(\R)$.  Thus, $M_h$ is also injective by \cite{BoFrJo}. So, $M_h$ is a topological isomorphism from  $\cS(\R)$ onto itself. 
	
	Suppose that $0\in {\rm Im}\,h$, i.e., that there exists $x_0\in\R$ such that $h(x_0)=0$. Now, we fix a function  $g\in \cD(\R)$ such that $g(x_0)=1$. Since $M_h$ is surjective, there exists $f\in \cS(\R)$ such that $M_hf=g$. Accordingly,   $h(x)f(x)=g(x)$ for every $x\in\R$ and hence, $0=h(x_0)f(x_0)=g(x_0)=1$. This is a contradiction.
	
	Since $0\not\in {\rm Im}\,h$, the function $\frac{1}{h}\in C^\infty(\R)$. Since $M_h$ is bijective (actually, it suffices the surjectivity), we  have also that
	$\frac{1}{h}g\in \cS(\R)$ for every $g\in\cS(\R)$. This means that  $\frac{1}{h}\in \cO_M(\R)$.

	(2)$\Rightarrow$(3). Since $M_h$ is surjective and the function $\textbf{1}(x):=1$ for $x\in\R$ belongs to $\cO_M(\R)$, there exists $f\in \cO_M(\R)$ such that $M_hf=\textbf{1}$, i.e., $f(x)h(x)=1$ for every $x\in\R$. This necessarily implies that $0\not\in {\rm Im}\, h$ and $f=\frac{1}{h}$. Accordingly, $\frac{1}{h}\in\cO_M(\R)$.
	
	
	(3)$\Rightarrow$(4). By  assumption the function $\frac{1}{h}\in \cO_M(\R)$. Therefore, for $l=0$ there exist $C>0$ and $j\in\N$ such that for every $x\in\R$
	\[
	\left|\frac{1}{h(x)}\right|\leq C(1+x^2)^j.
	\]
	Accordingly, we get for every $x\in\R$ that
	\[
	|h(x)|\geq \frac{1}{C}\frac{1}{(1+x^2)^j}.
	\]
	
	(4)$\Rightarrow$(3). Since $|h(x)|\geq \frac{c}{(1+x^2)^j}$ for every $x\in\R$ and some $c>0$ and $j\in\N$, necessarily  $0\not\in {\rm Im}\,h$. So,  it remains to show that $\frac{1}{h}\in \cO_M(\R)$. To this end, we observe that  $g(x):=f(h(x)-1)=\frac{1}{h(x)}$ for $x\in\R$, being $f$ the function given in \eqref{eq.funzione}. Hence, according to the  Fa\`a Bruno formula and Lemma \ref{L.ausF}, we have for every $n\in\N$ and $x\in\R$ that 
	\begin{align*}
	g^{(n)}(x)&=\sum\frac{n!}{k_1!k_2!\ldots k_n!}f^{(k)}(h(x)-1)\left(\frac{h'(x)}{1!}\right)^{k_1}\left(\frac{h''(x)}{2!}\right)^{k_2}\ldots \left(\frac{h^{(n)}(x)}{n!}\right)^{k_n}\\
	&=\sum\frac{n!}{k_1!k_2!\ldots k_n!}\frac{(-1)^kk!}{(h(x))^{k+1}}\left(\frac{h'(x)}{1!}\right)^{k_1}\left(\frac{h''(x)}{2!}\right)^{k_2}\ldots \left(\frac{h^{(n)}(x)}{n!}\right)^{k_n}
	\end{align*}
	where the sum is extended over all $(k_1,k_2,\ldots,k_n)\in \N_0^n$ such that $k_1+2k_2+\ldots+ nk_n=n$ and $k_1+k_2+\ldots+k_n=k$ (hence, $k_1+k_2+\ldots+k_n\leq n$). But, $h\in\cO_M(\R)$. So, for a fixed $l\in\N$ there exist $C>0$ and $s\in\N$ such that 
	\[
	|h^{(i)}(x)|\leq C(1+x^2)^s,\quad x\in\R,\ i=0,1,\ldots, l.
	\]
	Therefore, it follows for every $x\in\R$ and $n=0,1,\ldots, l$ that 
	\begin{align*}
	|g^{(n)}(x)|&\leq \sum\frac{n!k!}{k_1!k_2!\ldots k_n!}c^{-(k+1)}(1+x^2)^{j(k+1)}\frac{C^{k}(1+x^2)^{sk}}{(1!)^{k_1}(2!)^{k_2}\ldots (n!)^{k_n}}\\
	&\leq \left(\frac{C}{c}\right)^{n+1}(1+x^2)^{n(s+j)+1}\sum\frac{n!k!}{k_1!k_2!\ldots k_n!}\frac{1}{(1!)^{k_1}(2!)^{k_2}\ldots (n!)^{k_n}}\\
	&\leq \left(\frac{C}{c}\right)^{l+1}D(1+x^2)^{l(s+j)+1},
	\end{align*} 
	where $D:=\max\{\sum\frac{n!k!}{k_1!k_2!\ldots k_n!}\frac{1}{(1!)^{k_1}(2!)^{k_2}\ldots (n!)^{k_n}}\colon n=0,1,\ldots, l\}<+\infty$ (here, without loss of generality, we have assumed that $C>1$ and $0<c<1$). Since $l\in\N$ is arbitrary, this means that $\frac{1}{h}=g\in\cO_M(\R)$. 
	\end{proof}

\begin{rem}\label{R.onto} We observe that the condition $0\not\in {\rm Im}\,h$ does not imply that $M_h$ is surjective. Indeed, consider the function $h(x)=e^{-x^2}$ for $x\in\R$. Then $0\not\in {\rm Im}\,h=]0,1]$. But,  $M_h\colon \cS(\R)\to \cS(\R)$ is not surjective. Otherwise,  by Lemma \ref{L.Zero} the function $\frac{1}{h(x)}=e^{x^2}$, for $x\in\R$, should  belong to  $\cO_M(\R)$.  This is false.	
\end{rem}

We are ready to study the spectra of the multiplication operators acting  on $\cS(\R)$. 

\begin{thm}\label{T.Spectra} Let $h\in\cO_M(\R)$. Then the spectra of the multiplication operator $M_h$ acting either on $\cS(\R)$ or on $\cO_M(\R)$ are given by: 
	\begin{equation}\label{eq.Risolvente}
(\overline{{\rm Im}\,h})^c\subseteq	\rho(M_h)\subseteq ({\rm Im}\, h)^c,\ \rho^*(M_h)=(\overline{{\rm Im}\,h})^c,
\end{equation}
\begin{equation}\label{eq.Spectrum}
{\rm Im}\,h \subseteq\sigma(M_h)\subseteq \overline{{\rm Im}\,h,} \ \sigma^*(M_h)=\overline{{\rm Im}\,h},
	\end{equation}
	\begin{equation}\label{eq.PointSpectrum}
	\sigma_p(M_h)=\{\lambda\in\C\colon {\rm } {h^{-1}(\lambda)}\ {\rm has\ a \ non \ empty \ interior}\}\subseteq {\rm Im}\,h.
	\end{equation}
	\end{thm}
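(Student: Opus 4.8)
The plan is to reduce every assertion to the elementary identity $\lambda I-M_h=M_{\lambda-h}$, so that spectral questions about $M_h$ become bijectivity questions about multiplication by the multiplier $\lambda-h\in\cO_M(\R)$, which are already settled by Proposition \ref{L.Zero}. I would prove the three displays \eqref{eq.PointSpectrum}, \eqref{eq.Spectrum}, \eqref{eq.Risolvente} in a convenient order and, throughout, phrase the arguments so that they apply verbatim to $M_h$ on $\cS(\R)$ and on $\cO_M(\R)$; this is what yields the asserted coincidence of the two spectra, since both Proposition \ref{L.Zero} and the continuity of multiplication by an $\cO_M(\R)$-function hold simultaneously in the two spaces.

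First I would compute the point spectrum \eqref{eq.PointSpectrum}. If $f\neq 0$ satisfies $(\lambda-h)f=0$, then $f$ vanishes on the open set $\{h\neq\lambda\}$, so ${\rm supp}\,f\subseteq h^{-1}(\lambda)$; by continuity of $f$ this forces $h^{-1}(\lambda)$ to have nonempty interior. Conversely, if $h^{-1}(\lambda)$ contains an interval, a bump function $\varphi\in\cD(\R)$ supported there lies in $\cS(\R)\subseteq\cO_M(\R)$ and satisfies $(\lambda-h)\varphi=0$, whence $\lambda\in\sigma_p(M_h)$; since a nonempty interior requires $h^{-1}(\lambda)\neq\es$, this also gives $\sigma_p(M_h)\subseteq{\rm Im}\,h$.

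Next I would establish the inclusions \eqref{eq.Spectrum} and, by complementation, those for $\rho(M_h)$ in \eqref{eq.Risolvente}. If $\lambda\in{\rm Im}\,h$ then $0\in{\rm Im}\,(\lambda-h)$, so $M_{\lambda-h}$ fails to be surjective by Proposition \ref{L.Zero}, giving ${\rm Im}\,h\subseteq\sigma(M_h)$. If instead $\lambda\notin\overline{{\rm Im}\,h}$, then $\lambda\notin{\rm Im}\,h$ and, applying Lemma \ref{eq.ComMul} to the multiplier $h-\lambda-1\in\cO_M(\R)$ (for which $(-1)\notin\overline{{\rm Im}\,(h-\lambda-1)}$ precisely because $\lambda\notin\overline{{\rm Im}\,h}$), I obtain $\tfrac{1}{h-\lambda}=\tfrac{1}{1+(h-\lambda-1)}\in\cO_M(\R)$, hence $\tfrac{1}{\lambda-h}\in\cO_M(\R)$. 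By Proposition \ref{L.Zero} the operator $M_{\lambda-h}$ is then surjective; it is injective by the support argument above (its kernel is trivial since $h^{-1}(\lambda)=\es$), and its inverse is $M_{1/(\lambda-h)}$, which is continuous on both spaces. Thus $(\overline{{\rm Im}\,h})^c\subseteq\rho(M_h)$, and together with ${\rm Im}\,h\subseteq\sigma(M_h)$ all the inclusions in \eqref{eq.Risolvente} and \eqref{eq.Spectrum} follow.

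It remains to identify $\rho^*(M_h)=(\overline{{\rm Im}\,h})^c$, equivalently $\sigma^*(M_h)=\overline{{\rm Im}\,h}$. The inclusion $\overline{{\rm Im}\,h}\subseteq\sigma^*(M_h)$ is immediate, since $\sigma^*(M_h)$ is closed and contains $\sigma(M_h)\supseteq{\rm Im}\,h$. For the reverse inclusion, fix $\lambda_0\notin\overline{{\rm Im}\,h}$ and set $\delta:=\tfrac12\,{\rm dist}(\lambda_0,\overline{{\rm Im}\,h})>0$, so that $|\mu-h(x)|\geq\delta$ for all $x\in\R$ and all $\mu\in B(\lambda_0,\delta)\subseteq\rho(M_h)$, with $R(\mu,M_h)=M_{g_\mu}$ and $g_\mu:=\tfrac{1}{\mu-h}$. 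The main point, and the place where the real work lies, is to show that $\{M_{g_\mu}\colon\mu\in B(\lambda_0,\delta)\}$ is equicontinuous. Here I would run the Fa\`a di Bruno estimate exactly as in the implication (4)$\Rightarrow$(3) of Proposition \ref{L.Zero}: each derivative $g_\mu^{(n)}$ is a sum of terms carrying a factor $(\mu-h)^{-(k+1)}$ together with products of derivatives of $h$, and since $|\mu-h|^{-(k+1)}\leq\delta^{-(k+1)}$ uniformly in $\mu$, one obtains bounds $|g_\mu^{(n)}(x)|\leq C_n(1+x^2)^{s_n}$ with $C_n,s_n$ independent of $\mu\in B(\lambda_0,\delta)$. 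Consequently the seminorms $p_{m,v}(g_\mu)$ are bounded uniformly in $\mu$, and \eqref{eq.Prod} turns these uniform bounds into equicontinuity of $\{M_{g_\mu}\}$ on $\cO_M(\R)$; the analogous product estimate for the norms $\|\cdot\|_n$ gives equicontinuity on $\cS(\R)$. In either case $\{R(\mu,M_h)\colon\mu\in B(\lambda_0,\delta)\}$ is equicontinuous, so $B(\lambda_0,\delta)\subseteq\rho^*(M_h)$, which completes the proof.
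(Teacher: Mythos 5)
Your proposal is correct and follows essentially the same route as the paper: you reduce everything to $M_{\lambda-h}$, invoke Lemma \ref{eq.ComMul} to get $\frac{1}{\mu-h}\in\cO_M(\R)$ off $\overline{{\rm Im}\,h}$, use Proposition \ref{L.Zero} to exclude ${\rm Im}\,h$ from the resolvent set, prove $\rho^*(M_h)\supseteq(\overline{{\rm Im}\,h})^c$ via Fa\`a di Bruno bounds on $g_\mu^{(n)}$ that are uniform in $\mu$ on a small ball, and identify $\sigma_p(M_h)$ with bump functions. The only differences are cosmetic (ordering of the three displays, stating the arguments once for both spaces rather than treating $\cS(\R)$ first and indicating the changes for $\cO_M(\R)$), so this matches the paper's proof.
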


\begin{proof} We first consider the case $M_h\colon \cS(\R)\to \cS(\R)$. 
	
	Let $\lambda\not\in \overline{{\rm Im}\,h}$ and let $0<2d<\min\{1,d(\lambda,\overline{{\rm Im}\,h})\}$. Then $d(B(\lambda,d), \overline{{\rm Im}\,h}) \geq d$, thereby implying for every $\mu\in B(\lambda,d)$ that $(-1)\not\in \overline{{\rm Im}\,(\mu-h-1)}$. So, by Lemma \ref{eq.ComMul} for every $\mu\in B(\lambda,d)$ the function $\frac{1}{\mu-h}\in \cO_M(\R)$ and hence, $M_{\frac{1}{\mu-h}}\in \cL(S(\R))$. Since $(\mu I -M_h)M_{\frac{1}{\mu-h}}=I=M_{\frac{1}{\mu-h}}(\mu I-M_h)$ for  $\mu\in B(\lambda,d)$, it follows that $B(\lambda,d)\subseteq \rho(M_h)$. It remains to show that the set $\{M_{\frac{1}{\mu-h}}\colon \mu\in B(\lambda,\mu)\}$ is equicontinuous in $\cL(\cS(\R))$. To do this, we recall that   $h\in\cO_M(\R)$ and hence,   for a fixed $l\in\N$  there exist $C\geq 1$ and $j\in\N$ such that
	\[
	|h^{(i)}(x)|\leq C(1+x^2)^j, \quad x\in\R,\ i=0,1,\ldots,l.
	\]
Now, taking into  account of $\frac{1}{\mu-h}=\frac{1}{1+(\mu-h-1)}=f\circ (\mu-h-1)$ for $\mu\in B(\lambda,d)$, where $f(x)=\frac{1}{1+x}$ for $x\not=-1$,  and   proceeding as in the proof of Lemma \ref{eq.ComMul}, we obtain for every $x\in\R$, $n=0,1,\ldots, l$ and $\mu\in B(\lambda,d)$   that 
\begin{equation}\label{eq.DerivateTutte}
\left(\frac{1}{\mu-h(x)}\right)^{(n)}\leq \frac{C^lD}{d^{l+1}}(1+x^2)^{jl},
\end{equation}
where $D>0$ is a constant independent of $\mu$ and $h$. Applying  \eqref{eq.DerivateTutte}, we obtain for every $\mu\in B(\lambda,d)$,   $g\in\cS(\R)$ and $n=0,\ldots,l$, $x\in\R$ that 
\begin{align*}
(1+x^2)^l\left|\left(M_{\frac{1}{\mu-h}}g(x)\right)^{(n)}\right|&\leq \sum_{i=0}^n\binom{n}{i}\left|\left(\frac{1}{\mu-h(x)}\right)^{(n-i)}\right|(1+x^2)^l|g^{(i)}(x)|\\
&\leq \sum_{i=0}^n\binom{n}{i}\frac{C^lD}{d^{l+1}}(1+x^2)^{(j+1)l}|g^{(i)}(x)|\leq \frac{(2C)^lD}{d^{l+1}}||g||_{(j+1)l},
\end{align*}
and hence,
\begin{equation}\label{eq.StimaR}
\left|\left| M_{\frac{1}{\mu-h}}g\right|\right|_l\leq \frac{(2C)^lD}{d^{l+1}}||g||_{(j+1)l}.
\end{equation}
Since $l\in\N$ is arbitrary, from  \eqref{eq.StimaR} it   follows that the set $\{M_{\frac{1}{\mu-h}}\colon \mu\in B(\lambda,\mu)\}$ is equicontinuous in $\cL(\cS(\R))$. So, we have established that $(\overline{{\rm Im}\,h})^c\subseteq \rho^*(M_h) \subseteq \rho(M_h)$. 

Let $\lambda\in \rho(M_h)$. Then the operator $M_{\lambda-h}$ is a topological isomorphism from $\cS(\R)$ onto itself and hence, it is surjective. By Proposition \ref{L.Zero} it follows that $0\not\in {\rm Im}\, (\lambda-h)$ and hence $\lambda \not \in {\rm Im}\, h$. So, we have established that $\rho(M_h)\subseteq ({\rm Im}\,h)^c$.

Now, from $(\overline{{\rm Im}\,h})^c\subseteq \rho^*(M_h)\subseteq  \rho(M_h)\subseteq ({\rm Im}\,h)^c$ if follows that ${\rm Im}\,h\subseteq \sigma(M_h)\subseteq\sigma^*(M_h)\subseteq  \overline{{\rm Im}\, h}$. Since $\sigma^*(M_h)$ is closed, this yields  that $\sigma^*(M_h)=  \overline{{\rm Im}\, h}$ and hence, $\rho^*(M_h)=(\overline{{\rm Im}\, h})^c$.

Finally, suppose that $(\lambda I-M_h)g=0$ for some $\lambda\in\C$ and $g\in \cS(\R)$ with $g\not=0$. Since $g\not=0$, there exists an open subset $U$ of $\R$ such that $g(x)\not =0$ whenever $x\in U$. So, $h(x)=\lambda$ for every $x\in U$, thereby implying that $h^{-1}(\lambda)$ has a non empty interior. Conversely, if $h^{-1}(\lambda)$ has a non empty interior, then any function $g\in\cD(U)$, $g\not=0$, with $U$ the interior set of $h^{-1}(\lambda)$, clearly satisfies  $(\lambda I-M_h)g=0$, i.e., $\lambda\in \sigma_p(M_h)$. This completes the proof.

The proof for the case $M_h\colon \cO_M(\R)\to\cO_M(\R)$ follows by arguing as above with some obviuos changes. We only prove that $(\overline{{\rm Im}\, h})^c\su\rho^*(M_h)$. So, $\lambda\not\in \overline{{\rm Im}\, h}$ and let $0<2d<\min\{1,d(\lambda, \overline{{\rm Im}\, h})\}$. Then for a fixed $l\in\N$, the same arguments above show that inequality \eqref{eq.DerivateTutte} is valid for every $x\in\R$, $n=0,\ldots, l$ and $\mu\in B(\lambda,d)$. Applying \eqref{eq.DerivateTutte}, we obtain for every $\mu\in B(\lambda,d)$, $g\in \cO_M(\R)$, $v\in \cS(\R)$ and $n=0,\ldots, l$, $x\in\R$ that
\begin{align*}
|v(x)|\left|\left(M_{\frac{1}{\mu-h}}g(x)\right)^{(n)}\right|&\leq \sum_{i=0}^n\binom{n}{i}\left|\left(\frac{1}{\mu-h(x)}\right)^{(n-i)}\right||v(x)g^{(i)}(x)|\\
&\leq \sum_{i=0}^n\binom{n}{i}\frac{C^lD}{d^{l+1}}(1+x^2)^{jl}|v(x)g^{(i)}(x)|\leq \frac{(2C)^lD}{d^{l+1}}p_{l,w}(g),
\end{align*}
where $w(x):=(1+x^2)^{jl}v(x)$ for $x\in\R$ and hence, $w\in \cS(\R)$. Therefore, we get for every $\mu\in B(\lambda,d)$ and $g\in \cO_M(\R)$ that 
\[
p_{l,v}\left(M_{\frac{1}{\mu-h}}g\right)\leq \frac{(2C)^lD}{d^{l+1}}p_{l,w}(g).
\]
Since $l\in\N$ and $v\in\cS(\R)$ are arbitrary, this means that the set $\{M_{\frac{1}{\mu-h}}\colon \mu\in B(\lambda,d)\}$ is equicontinuous in $\cL(\cO_M(\R))$.
\end{proof}

\begin{rem} The description of $\rho(M_h)$ given in \eqref{eq.Risolvente} cannot be improved. Indeed, for the function $h(x):=e^{-x^2}$ for $x\in\R$ we have $0\not\in {\rm Im}\,h$ as $ {\rm Im}\,h=]0,1]$. But, $0\not\in \rho(M_h)$, see Remark \ref{R.onto}. So, by Theorem \ref{T.Spectra} $\rho^*(M_h)=\rho(M_h)=([0,1])^c$.
	
	 On the other hand, for the function $k(x)=\frac{1}{1+x^2}$ for $x\in\R$ we have that $0\not\in  {\rm Im}\,k=]0,1]$ and $\frac{1}{k(x)}=1+x^2\in \cO_M(\R)$. By Proposition \ref{L.Zero} the operator $M_k\colon \cS(\R)\to \cS(\R)$ is surjective. According to \cite{BoFrJo},   $M_k\colon \cS(\R)\to \cS(\R)$ is also injective. Thus, $M_k$ is a topological isomorphism from $\cS(\R)$ onto itself and so $0\in\rho(M_k)$. So, by Theorem \ref{T.Spectra} $\rho^*(M_h)=([0,1])^c\subsetneqq\rho(M_h)=(]0,1])^c$. Moreover, $\sigma(M_k)=]0,1]\subsetneqq \sigma^*(M_k)=[0,1]$.
\end{rem}

As a consequence of Theorem \ref{T.Spectra} we  can determine the spectra of the convolution operators $C_T$ acting on $\cS(\R)$ via the properties of the Fourier transform.

\begin{thm}\label{T.SpectraC} Let $T\in \cO_C'(\R)$. Then the spectra of the convolution operator $C_T$ acting either on $\cS(\R)$ or on $\cO_C(\R)$ are given by:
		\begin{equation}\label{eq.RisolventeC}
	(\overline{{\rm Im}\,\cF(T)})^c\subseteq	\rho(C_T)\subseteq ({\rm Im}\, \cF(T))^c,\ \rho^*(C_T)=(\overline{{\rm Im}\,\cF(T)})^c,
	\end{equation}
	\begin{equation}\label{eq.SpectrumC}
	{\rm Im}\,\cF(T) \subseteq\sigma(C_T)\subseteq \overline{{\rm Im}\,\cF(T),} \ \sigma^*(C_T)=\overline{{\rm Im}\,\cF(T)}.
	\end{equation}
	Moreover, the point spectrum of  the convolution operator $C_T$ acting  on $\cS(\R)$ is given by:
	\begin{equation}\label{eq.PointSpectrumC}
	\sigma_p(C_T)=\{\lambda\in\C\colon {\rm } {(\cF(T))^{-1}(\lambda)}\ {\rm has\ a \ non \ empty \ interior}\}\subseteq {\rm Im}\,\cF(T).
	\end{equation}
\end{thm}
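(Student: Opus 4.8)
The plan is to deduce everything from the multiplication operators already handled in Theorem \ref{T.Spectra}, exploiting the intertwining relations \eqref{eq.Compo1}. The starting observation is that, since $\cF\colon\cO_C'(\R)\to\cO_M(\R)$ is a topological isomorphism, the symbol $\cF(T)$ lies in $\cO_M(\R)$; hence Theorem \ref{T.Spectra} applies with $h=\cF(T)$ and already produces the right-hand sides of \eqref{eq.RisolventeC}, \eqref{eq.SpectrumC} and \eqref{eq.PointSpectrumC}. The whole proof is then a matter of transporting those spectral data along the Fourier transform.

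For $C_T$ acting on $\cS(\R)$ I would use the first relation in \eqref{eq.Compo1}, $\cF\circ C_T=M_{\cF(T)}\circ\cF$, together with the fact that $\cF\colon\cS(\R)\to\cS(\R)$ is a topological isomorphism. For every $\lambda\in\C$ this gives $\lambda I-C_T=\cF^{-1}(\lambda I-M_{\cF(T)})\cF$, so $\lambda I-C_T$ and $\lambda I-M_{\cF(T)}$ are conjugate through one fixed isomorphism. Conjugation by a topological isomorphism preserves injectivity, bijectivity with continuous inverse, and equicontinuity of families of operators; consequently $\rho(C_T)=\rho(M_{\cF(T)})$, $\rho^*(C_T)=\rho^*(M_{\cF(T)})$, $\sigma_p(C_T)=\sigma_p(M_{\cF(T)})$, and likewise for $\sigma$ and $\sigma^*$. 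Feeding in the $\cS(\R)$ part of Theorem \ref{T.Spectra} with $h=\cF(T)$ then yields \eqref{eq.RisolventeC}, \eqref{eq.SpectrumC} and \eqref{eq.PointSpectrumC}.

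For $C_T$ acting on $\cO_C(\R)$ the Fourier transform is not available as a self-map of $\cO_C(\R)$, so I would instead pass to the transpose $\cC_T=C_T'$ on $\cO_C'(\R)$, where the second relation in \eqref{eq.Compo1} reads $\cF\circ\cC_T=M_{\cF(T)}\circ\cF$ with $\cF\colon\cO_C'(\R)\to\cO_M(\R)$ a topological isomorphism. Exactly as in the previous paragraph, conjugation shows that the spectral data of $\cC_T$ on $\cO_C'(\R)$ coincide with those of $M_{\cF(T)}$ on $\cO_M(\R)$, which are furnished by the $\cO_M(\R)$ part of Theorem \ref{T.Spectra}. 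What remains is to transfer this information from the transpose $\cC_T$ back to $C_T$ itself.

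This transfer step is the main obstacle. Since $\cO_C(\R)$ is Montel, hence reflexive, the canonical identification gives $C_T=(\cC_T)'$ and $\lambda I-\cC_T=(\lambda I-C_T)'$ for all $\lambda$. I would invoke the standard duality facts for reflexive lcHs: an operator $S\in\cL(\cO_C(\R))$ is a topological isomorphism if and only if $S'\in\cL(\cO_C'(\R))$ is, which yields $\rho(C_T)=\rho(\cC_T)$ and $\sigma(C_T)=\sigma(\cC_T)$; and a family in $\cL(\cO_C(\R))$ is equicontinuous if and only if the transposed family is equicontinuous in $\cL(\cO_C'(\R))$, by the polar duality between equicontinuous sets valid in the reflexive setting. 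The second equivalence is precisely what upgrades the equality of resolvent sets to equality of the sets $\rho^*$ (equivalently $\sigma^*$), and it is the only non-formal input; reflexivity of $\cO_C(\R)$ is essential here. Once it is in place, the $\cO_C(\R)$ spectra inherit the formulas \eqref{eq.RisolventeC} and \eqref{eq.SpectrumC} from $M_{\cF(T)}$ on $\cO_M(\R)$, while the point-spectrum identity \eqref{eq.PointSpectrumC}, being stated only for $\cS(\R)$, is already settled by the second paragraph.
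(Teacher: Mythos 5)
Your proposal is correct and follows essentially the same route as the paper: the $\cS(\R)$ case by conjugation through $\cF\circ C_T=M_{\cF(T)}\circ\cF$, and the $\cO_C(\R)$ case by first treating $\cC_T$ on $\cO_C'(\R)$ via $\cF\circ\cC_T=M_{\cF(T)}\circ\cF$ and then passing back to $C_T=\cC_T'$ by duality. The reflexivity-based transfer of bijectivity and of equicontinuous families that you spell out is precisely what the paper compresses into the single phrase ``the result follows by duality, being $C_T=\cC_T'$.''
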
 

\begin{proof} In the case $C_T\colon \cS(\R)\to \cS(\R)$, the proof immediately follows from Theorem \ref{T.Spectra}, after having observed that $\cF\circ C_T=M_{\cF(T)}\circ \cF$ (by \eqref{eq.Compo1} with $n=1$) where $\cF(T)\in \cO_M(\R)$, and that the Fourier transfom $\cF$ is a topological isomorphism from $\cS(\R)$ onto itself.
	
	In the case $C_T\colon \cO_C(\R)\to \cO_C(\R)$, we first observe that Theorem \ref{T.Spectra} and $\cF\circ \cC_T=M_{\cF(T)}\circ \cF$, where $\cF\colon \cO_C'(\R)\to \cO_M(\R)$ is a topological isomorphism onto, imply that \eqref{eq.RisolventeC} and \eqref{eq.SpectrumC} ( and \eqref{eq.PointSpectrumC}) are true with $\cC_T$ instead of $C_T$. Now, the result follows by  duality, being $C_T=\cC_T'$.
\end{proof}

\section{Ergodic properties  of Multiplication and Convolution  operators on $\cS(\R)$}

The aim of this section is to investigate the ergodic  properties of the multiplication operators $M_h$, for $h\in\cO_M(\R)$, and the convolution operators $C_T$, for $T\in \cO_C'(\R)$, when acting on the space $\cS(\R)$. 

So, we first observe that if $f\in\cS(\R)$ (or $f\in\cO_M(\R)$) and $n\in\N$, 
\[
M_h^nf(x)=h^n(x)f(x),\quad x\in\R.
\]
Therefore, if $f\in\cS(\R)$ (or $f\in\cO_M(\R)$) and $n\in\N$, the $n$th-Ces\`aro mean of $M_h$ is given by  
\[
(M_h)_{[n]}f(x)=\frac{1}{n}\sum_{m=1}^nM^m_hf(x)=\frac{1}{n}\sum_{m=1}^nh^m(x)f(x)=\frac{f(x)}{n}\sum_{m=1}^nh^m(x),\quad x\in\R.
\]
If we set $h_{[n]}:=\frac{1}{n}\sum_{m=1}^nh^m$ for $n\in\N$, then $(M_h)_{[n]}f=h_{[n]}f$ for any $f\in \cS(\R)$ (or $f\in \cO_M(\R)$).

\begin{thm}\label{T.PowerB} Let $h\in \cO_{M}(\R)$. Then the following properties are equivalent.
	\begin{enumerate}
		\item[(1)] $M_h$ is power bounded on $\cS(\R^N)$.
		\item[(1)'] $\cM_h$ is power bounded on $\cS'(\R^N)$.
		\item[(2)] $M_h$ is power bounded on $\cO_M(\R^N)$.
		\item[(2)'] $\cM_h$ is power bounded on $\cO'_M(\R^N)$.
		\item[(3)] The sequence $\{h^n\}_{n\in\N}$ is bounded in $\cO_{M}(\R)$.
	\end{enumerate}
\end{thm}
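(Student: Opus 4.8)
The plan is to funnel all four operator statements through the single concrete condition (3) and then to pass between $M_h$ and its transpose $\cM_h$ by duality. Throughout I use the identity $M_h^n=M_{h^n}$ (multiplication by $h^n$) recorded just before the statement, so that ``$M_h$ power bounded'' means exactly that the family $\{M_{h^n}\}_{n\in\N}$ is equicontinuous.

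First I would settle (2)$\Leftrightarrow$(3), which is the cheapest. For (2)$\Rightarrow$(3), equicontinuity of $\{M_{h^n}\}$ on $\cO_M(\R)$ means that for each seminorm $p_{m,v}$ there are a seminorm $p_{m',v'}$ and $C>0$ with $p_{m,v}(h^n g)\le C\,p_{m',v'}(g)$ for all $n$ and all $g$; testing this on the constant function $\mathbf 1\in\cO_M(\R)$ gives $\sup_n p_{m,v}(h^n)\le C\,p_{m',v'}(\mathbf 1)<\infty$, and since $m,v$ are arbitrary the sequence $\{h^n\}$ is bounded in $\cO_M(\R)$. For the converse (3)$\Rightarrow$(2) I would invoke the algebra inequality \eqref{eq.Prod}: given $m,v$ choose $v_1,v_2,m'$ as there, so that $p_{m,v}(h^n g)\le p_{m',v_1}(h^n)\,p_{m',v_2}(g)$; boundedness of $\{h^n\}$ bounds $\sup_n p_{m',v_1}(h^n)=:K<\infty$, whence $p_{m,v}(h^n g)\le K\,p_{m',v_2}(g)$ uniformly in $n$, i.e.\ $\{M_{h^n}\}$ is equicontinuous on $\cO_M(\R)$.

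Next I would prove (1)$\Leftrightarrow$(3) on $\cS(\R)$. For (3)$\Rightarrow$(1) I would first rewrite boundedness of $\{h^n\}$ in $\cO_M(\R)$ in its usual pointwise form, namely that for every $m$ there are $C_m>0$ and $N_m\in\N$ with $|(h^n)^{(i)}(x)|\le C_m(1+x^2)^{N_m}$ for all $n$, all $i\le m$ and all $x$. Then, for $f\in\cS(\R)$, Leibniz' rule gives $(h^nf)^{(i)}=\sum_{j=0}^i\binom{i}{j}(h^n)^{(j)}f^{(i-j)}$, and absorbing the polynomial factor $(1+x^2)^{l+N_l}$ into the rapid decay of $f$ yields $\|h^nf\|_l\le C\,\|f\|_{l+N_l}$ with $C$ independent of $n$; this is equicontinuity on $\cS(\R)$. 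For (1)$\Rightarrow$(3) I would argue by induction on the derivative order, proving that for every $i$ and every $w\in\cS(\R)$ one has $\sup_n\sup_x|w(x)(h^n)^{(i)}(x)|<\infty$. The base case $i=0$ is the $l=0$ instance of equicontinuity applied to $w$. For the inductive step I would use the rearranged Leibniz identity $(h^n)^{(i)}w=(h^nw)^{(i)}-\sum_{j=0}^{i-1}\binom{i}{j}(h^n)^{(j)}w^{(i-j)}$: the first term is controlled uniformly in $n$ by the $l=i$ instance of equicontinuity, since $\sup_x|(h^nw)^{(i)}(x)|\le\|h^nw\|_i\le C\|w\|_k$, while each cross term is controlled by the inductive hypothesis applied at order $j<i$ with the Schwartz weight $w^{(i-j)}$. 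Taking $w=v$ and the supremum over $i\le m$ gives $\sup_n p_{m,v}(h^n)<\infty$ for all $m,v$, i.e.\ (3).

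Finally I would obtain the primed statements by transposition. Since $\cS(\R)$ and $\cO_M(\R)$ are Montel, hence reflexive, and since $\cM_h=M_h'$, power boundedness of $M_h$ is equivalent to power boundedness of $\cM_h$ on the strong dual, because equicontinuity of $\{M_{h^n}\}$ is equivalent to equicontinuity of $\{\cM_{h^n}\}=\{(M_{h^n})'\}$ by the standard duality of equicontinuous sets in reflexive spaces (see \cite{ABR-0}). This yields (1)$\Leftrightarrow$(1)' and (2)$\Leftrightarrow$(2)' at once, closing the chain. The step I expect to be the main obstacle is (1)$\Rightarrow$(3): equicontinuity on $\cS(\R)$ only controls derivatives of the \emph{products} $h^nf$, and one must disentangle from these a uniform bound on the derivatives of $h^n$ alone in the $\cO_M(\R)$ seminorms; the induction above is precisely the device that performs this disentangling, and it is also the place where care is needed in keeping the Schwartz weights $w^{(i-j)}$ inside the inductive hypothesis.
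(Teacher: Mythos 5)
Your proposal is correct, but it reaches the hard implication by a genuinely different route than the paper. The paper's chain is (1)$\Rightarrow$(2)$\Rightarrow$(3)$\Rightarrow$(1): for (1)$\Rightarrow$(2) it first restricts to compact sets (via cut-off functions) to get boundedness of $\{h^n\}$ in $C^\infty(\R)$, and then runs a proof by contradiction with a gliding-hump construction -- if $\{h^n\}$ were unbounded in $\cO_M(\R)$, one builds points $x_k$ with $|x_k|>|x_{k-1}|+2$, exponents $n_k$, and the single Schwartz function $\sum_k \rho(x-x_k)/(1+x_k^2)^k$ whose orbit under $M_h$ is unbounded in $\cS(\R)$. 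Your proof replaces all of this by the direct induction on the derivative order, using the rearranged Leibniz identity $(h^n)^{(i)}w=(h^nw)^{(i)}-\sum_{j<i}\binom{i}{j}(h^n)^{(j)}w^{(i-j)}$ to pass the Schwartz weight through and obtain $\sup_n p_{m,v}(h^n)<\infty$ for every $m,v$, i.e.\ (1)$\Rightarrow$(3) without any contradiction argument or explicit construction; this is more elementary and, in my view, cleaner for this particular theorem. What the paper's technique buys instead is reusability: the same gliding-hump construction is recycled verbatim in the proof of Theorem \ref{T.MeanERg} for the Ces\`aro means $h_{[n]}$, where your induction would need to be redone with $h_{[n]}$ in place of $h^n$ (it does go through, since only equicontinuity of the family and Leibniz are used, but the paper factors the work differently). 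The remaining steps coincide: (2)$\Leftrightarrow$(3) via the constant function $\mathbf 1$ and the algebra inequality \eqref{eq.Prod}, and the primed equivalences via reflexivity and duality of equicontinuous families (the paper cites \cite[Lemma 2]{ABR-2} rather than \cite{ABR-0}, an immaterial difference).

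One point you gloss over: in (3)$\Rightarrow$(1) you rewrite boundedness of $\{h^n\}$ in $\cO_M(\R)$ as the existence, for each $m$, of a \emph{single} $N_m$ and $C_m$ with $|(h^n)^{(i)}(x)|\leq C_m(1+x^2)^{N_m}$ for all $n$, $i\leq m$, $x$. This is not a formal triviality: boundedness for all seminorms $p_{m,v}$ only yields such a uniform polynomial bound because bounded subsets of the (LB)-space $\ind_{\stackrel{n}{\to}}\cO^m_n(\R)$ are contained and bounded in one step, i.e.\ because these inductive limits are regular -- a fact the paper invokes explicitly at the corresponding point of its proof. Since the steps are Banach spaces and the limits are complete, regularity does hold here, so this is a presentational gap rather than a mathematical one, but it should be stated.
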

	
	\begin{proof} (1)$\Leftrightarrow$(1)' follows from \cite[Lemma 2]{ABR-2}, after having observed that $\cM_h=M_h'$ and $\cS(\R)$ is a reflexive lcHs.
		
		(2)$\Leftrightarrow$(2)' follows by the same arguments above.
		
		(1)$\Rightarrow$(2). Since $\cS(\R)$ is continuously included in $C^\infty(\R)$, the assumption implies that  $\{M_h^nf\}_{n\in\N}$ is a bounded subset of $C^\infty(\R)$ whenever $f\in \cS(\R)$. This necessarily yields that $\{h^n\}_{n\in\N}$ is a bounded sequence in $C^\infty(\R)$. Indeed, fixed a compact subset $K$ of $\R$ and choosen $f\in \cD(\R)$ such that $f(x)=1$ for every $x\in K$, we obtain for every  $n\in\N$ and $m\in \N$ that 
		\[
		\sup_{x\in K}\sup_{0\leq i\leq m}|(h^{n}(x))^{(i)}|=\sup_{x\in K}\sup_{0\leq i\leq m}|(h^{n}(x)f(x))^{(i)}|\leq C
		\]
	where $C:=\sup_{n\in\N}\sup_{x\in K}\sup_{0\leq i\leq m}|(h^{n}(x)f(x))^{(i)}|<+\infty$ as $f\in \cS(\R)$. Since $K$ is an arbitrary compact subset of $\R$, this means that $\{h^n\}_{n\in\N}$ is a bounded sequence of $C^\infty(\R)$. 
	
	The sequence $\{h^n\}_{n\in\N}$ is also  bounded in  $\cO_M(\R)$. Otherwise,  there exists $m\in\N$ such that the sequence $\{h^n\}_{n\in\N}$ is not bounded in $\cup_{k=1}^\infty\{f\in C^\infty(\R)\colon |f|_{m,k}=\sup_{x\in\R}\sup_{0\leq i\leq m}(1+x^2)^{-k}|f^{(i)}(x)|<\infty\}$. So, for every $k\in\N$ we have 
	\[
	\sup_{n\in\N}|h^n|_{m,k}=\infty.
	\]
	Consequently,
	taking into account that $\{h^n\}_{n\in\N}$ is a bounded sequence of $C^\infty(\R)$, we can find a sequence $\{x_k\}_{k\in\N}\su\R$ with $|x_k|>|x_{k-1}|+2$, for $k\geq 2$, an increasing sequence $\{n_k\}_{k\in\N}$ of positive integers and a sequence $\{i_k\}_{k\in\N}\su \{0,1,\ldots,m\}$ such that 
	\begin{equation}\label{eq.CCP}
	(1+x^2_k)^{-k}|(h^{n_k})^{(i_k)}(x_k)|>k, \quad k\in\N.
	\end{equation}
	Since the set $\{0,1,\ldots,m\}$ is finite, we can assume without loss of generality (indeed, it suffices to pass to a subsequence) that \eqref{eq.CCP} is valid for every $k\in\N$ with the same index $i$ in $\{0,1,\ldots,m\}$. Now, let $\rho\in\cD(\R)$ such that ${\rm supp}\, \rho \su ]-2,2[$ and $\rho(x)=1$ for $x\in [-1,1] $. Then  the function
	\[
	\rho(x)=\sum_{k=1}^\infty\frac{\rho(x-x_k)}{(1+x^2_k)^k},\quad x\in\R, 
	\]
	belongs to $\cS(\R)$, see, f.i., \cite[Proposition 4, Chap. 4 \S 11]{Ho}. Since $M_h$ is power bounded on $\cS(\R)$, the sequence $\{M_h^n\rho\}_{n\in\N}=\{h^n\rho\}_{n\in\N}$ is bounded in $\cS(\R)$. This implies that 
	\begin{equation}\label{eq.BPP}
	\sup_{n\in\N}\sup_{x\in\R}|(h^n(x)\rho(x))^{(i)}|=:C<\infty.
	\end{equation}
	But, by \eqref{eq.CCP} we get
	\[
	\sup_{k\in\N}\sup_{|x-x_k|<1}|(h^{n_k}(x)\rho(x))^{(i)}|=\sup_{k\in\N}\sup_{|x-x_k|<1}\left|\frac{(h^{n_k}(x))^{(i)}}{(1+x_k^2)^k}\right|\geq \sup_{k\in\N}\left|\frac{(h^{n_k})^{(i)}(x_k)}{(1+x_k^2)^k}\right|=+\infty.
	\] 
	This is a contradiction with \eqref{eq.BPP}.
	
	Since $\{h^n\}_{n\in\N}$ is a bounded sequence in $\cO_M(\R)$, for each $v\in \cS(\R)$ and $m\in\N$ there exists $C_{m,v}>0$ such that 
	\begin{equation}\label{eq.BBPB}
	\sup_{n\in\N}p_{m,v}(h^n)\leq C_{m,v}.
	\end{equation}
	 This together with \eqref{eq.Prod} clearly implies that $\{M_h^n\}_{n\in\N}$ is an equicontinuous set in $\cL(\cO_M(\R))$, i.e., $M_h$ is power bounded when it acting on $\cO_M(\R)$. Indeed, fixed $v\in\cS(\R)$ and $m\in\N$ and choosen $v_1,v_2\in\cS(\R)$ and $m'\in\N$ as in \eqref{eq.Prod}, we have for every $n\in\N$ and $f\in \cO_M(\R)$ that 
	 \[
	 p_{m,v}(M_h^nf)=p_{m,v}(h^nf)\leq p_{m',v_1}(h^n)p_{m',v_2}(f)\leq C_{m',v_1}p_{m',v_2}(f).
	 \]
	
	(2)$\Rightarrow$(3). The operator $M_h$ is power bounded on $\cO_M(\R)$. Accordingly, for the constant function $\mathbf{1}$ which belongs to $\cO_M(\R)$, the sequence $\{M^n\mathbf{1}\}_{n\in\N}=\{h^n\}_{n\in\N}$ is  bounded in $\cO_M(\R)$.
	
	(3)$\Rightarrow$(1). Let $m\in\N$ be fixed. Since $\{h^n\}_{n\in\N}$ is a bounded sequence in $\cO_M(\R)$ and $\cO_M(\R)$ is the projective limit of the regular (LB)-spaces $\{\cup_{k=1}^\infty\{f\in C^\infty(\R)\colon |f|_{l,k}=\sup_{x\in\R}\sup_{0\leq i\leq l}(1+x^2)^{-k}|f^{(i)}(x)|<\infty\}\}_{l\in\N}$ , it is also a bounded sequence in  $\cup_{k=1}^\infty \{f\in C^\infty(\R)\colon |f|_{m,k}=\sup_{x\in\R}\sup_{0\leq i\leq m}(1+x^2)^{-k}|f^{(i)}(x)|<\infty\}$ and hence, there exists $k\in\N$ such that
	\[
	\sup_{n\in\N}|h^n|_{m,k}=:C<\infty.
	\]
	Therefore, we obtain for every $n\in\N$ and $f\in\cS(\R)$ that 
	\begin{align*}
	||M_h^nf||_m&=\sup_{x\in\R}\sup_{0\leq i\leq m}(1+x^2)^m|(h^n(x)f(x))^{(i)}|\\
	&\leq \sup_{x\in\R}\sup_{0\leq i\leq m}(1+x^2)^m\sum_{j=0}^i\binom{i}{j}|(h^n(x))^{(j)}f^{(i-j)}(x)|\\
	&\leq \sup_{x\in\R}\sup_{0\leq i\leq m}(1+x^2)^{m+k}\sum_{j=0}^i\binom{i}{j}(1+x^2)^{-k}|(h^n(x))^{(j)}||f^{(i-j)}(x)|\\
	&\leq 2^mC||f||_{m+k}.
	\end{align*}
	This means that $M_h$ is power bounded on $\cS(\R)$.
\end{proof}

\begin{thm}\label{T.MeanERg} Let $h\in \cO_{M}(\R)$. Then the following properties are equivalent.
	\begin{enumerate}
		\item[(1)] $M_h$ is (uniformly) mean ergodic on $\cS(\R^N)$.
		\item[(1)'] $\cM_h$ is (uniformly) mean ergodic on $\cS'(\R^N)$.
		\item[(2)] $M_h$ is  (uniformly) mean ergodic on $\cO_M(\R^N)$.
		\item[(2)'] $\cM_h$ is  (uniformly) mean ergodic on $\cO'_M(\R^N)$.
		\item[(3)] The sequence $\{h_{[n]}\}_{n\in\N}$ converges to some  $g$ in $\cO_{M}(\R)$.
	\end{enumerate}
	\end{thm}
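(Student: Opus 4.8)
The plan is to establish the cycle (3)$\Rightarrow$(1), (3)$\Rightarrow$(2), (2)$\Rightarrow$(3), (1)$\Rightarrow$(3), which gives (1)$\Leftrightarrow$(2)$\Leftrightarrow$(3), and then to derive the two dual equivalences (1)$\Leftrightarrow$(1)$'$ and (2)$\Leftrightarrow$(2)$'$ by transposition. All four spaces $\cS(\R)$, $\cO_M(\R)$, $\cS'(\R)$, $\cO'_M(\R)$ are Montel, so on each of them mean ergodicity and uniform mean ergodicity coincide and the parenthetical ``(uniformly)'' may be dropped; throughout, the candidate limit operator is the multiplication operator $M_g$ by the prospective limit $g$ of $\{h_{[n]}\}_{n\in\N}$, where $h_{[n]}=\tfrac1n\sum_{m=1}^n h^m$ and $(M_h)_{[n]}f=h_{[n]}f$.

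The two implications out of (3) follow directly from $(M_h)_{[n]}f=h_{[n]}f$. On $\cO_M(\R)$ the product estimate \eqref{eq.Prod} gives, for every seminorm $p_{m,v}$ and every $f\in\cO_M(\R)$,
\[
p_{m,v}\big((M_h)_{[n]}f-M_gf\big)=p_{m,v}\big((h_{[n]}-g)f\big)\le p_{m',v_1}(h_{[n]}-g)\,p_{m',v_2}(f)\longrightarrow0,
\]
so $(M_h)_{[n]}\to M_g$ in $\cL_s(\cO_M(\R))$, which is (2). On $\cS(\R)$ I would fix $m$, use the regularity of the step $\ind_n\cO_n^m(\R)$ to find a single weight index $k$ with $|h_{[n]}-g|_{m,k}\to0$, and then apply the same Leibniz estimate as in the proof of Theorem \ref{T.PowerB}(3)$\Rightarrow$(1), namely $\|M_\phi f\|_m\le 2^m|\phi|_{m,k}\|f\|_{m+k}$, to conclude $\|(h_{[n]}-g)f\|_m\to0$ for every $f\in\cS(\R)$; hence $(M_h)_{[n]}\to M_g$ in $\cL_s(\cS(\R))$, which is (1). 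Conversely (2)$\Rightarrow$(3) is immediate, since $\mathbf1\in\cO_M(\R)$ and $(M_h)_{[n]}\mathbf1=h_{[n]}$, so $h_{[n]}\to P\mathbf1=:g$ in $\cO_M(\R)$ as soon as $(M_h)_{[n]}\to P$ in $\cL_s(\cO_M(\R))$.

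The heart of the matter is (1)$\Rightarrow$(3), where the evaluation at $\mathbf1$ is no longer available because $\mathbf1\notin\cS(\R)$, and I would proceed in three steps. First, mean ergodicity on $\cS(\R)$ makes $\{h_{[n]}f\}_n=\{(M_h)_{[n]}f\}_n$ a convergent, hence bounded, sequence in $\cS(\R)$ for each $f\in\cS(\R)$; repeating verbatim the sliding--hump argument of the proof of Theorem \ref{T.PowerB}(1)$\Rightarrow$(2), with $h^n$ replaced by $h_{[n]}$, then forces $\{h_{[n]}\}_n$ to be bounded in $\cO_M(\R)$. Second, this boundedness forces $|h(x)|\le1$ for every $x\in\R$, since otherwise $|h_{[n]}(x)|=\big|\tfrac1n\sum_{m=1}^n h(x)^m\big|\to\infty$ at such a point; consequently the pointwise limit $g(x):=\lim_n h_{[n]}(x)$ exists for every $x$ (equal to $1$ where $h(x)=1$ and to $0$ elsewhere). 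Third, $\cO_M(\R)$ being Montel, the bounded sequence $\{h_{[n]}\}$ is relatively compact, and any accumulation point in $\cO_M(\R)$ must agree with the pointwise limit $g$, hence equal $g$; so $g$ is the unique accumulation point and, by the subsequence criterion for convergence (valid in any topological space), $h_{[n]}\to g$ in $\cO_M(\R)$, which is (3).

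Finally, for (1)$\Leftrightarrow$(1)$'$ and (2)$\Leftrightarrow$(2)$'$ I would argue by duality exactly as in Theorem \ref{T.PowerB}: since $\cM_h=M_h'$, since the Ces\`aro means transpose as $(\cM_h)_{[n]}=\big((M_h)_{[n]}\big)'$, and since $\cS(\R)$ and $\cO_M(\R)$ are reflexive, the (uniform) mean ergodicity of $M_h$ is equivalent to that of its transpose $\cM_h$ on the strong dual. The main obstacle is precisely the step (1)$\Rightarrow$(3): one must transfer an ergodicity statement on the small space $\cS(\R)$ into a convergence statement in the large multiplier algebra $\cO_M(\R)$, and since this cannot be read off a single test function, it is obtained through the boundedness/Montel-compactness/uniqueness scheme above.
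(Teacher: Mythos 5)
Your overall architecture (the cycle through (3), plus duality for the primed statements) is sound, and your (3)$\Rightarrow$(2), (2)$\Rightarrow$(3) and duality steps coincide with the paper's. But there is one genuine gap, in (3)$\Rightarrow$(1). You claim that $h_{[n]}\to g$ in $\cO_M(\R)$ together with ``regularity of the step $\ind_n\cO_n^m(\R)$'' produces a single weight index $k$ with $|h_{[n]}-g|_{m,k}\to 0$. Regularity of an (LB)-space says only that every \emph{bounded} set is contained and bounded in some step; it does not say that convergent sequences converge in the norm of some step --- that is the strictly stronger property of sequential (bounded) retractivity, which for weighted inductive limits requires a separate argument in the spirit of the Bierstedt--Meise--Summers theory and cannot be cited as ``regularity.'' As written, what you are entitled to is only $\sup_n|h_{[n]}-g|_{m,k}<\infty$ for some $k$. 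The gap is repairable in two ways: (i) the paper's own route, which bypasses the steps entirely --- for fixed $f\in\cS(\R)$ and $m$, choose $v\in\cS(\R)$ with $\sup_{0\le j\le m}(1+x^2)^m|f^{(j)}(x)|\le v(x)$ (\cite[Lemma 3.6]{Ch}), so that Leibniz gives $\|(h_{[n]}-g)f\|_m\le 2^m p_{m,v}(h_{[n]}-g)\to 0$ directly from hypothesis (3); or (ii) keep the step-boundedness that regularity does give, observe that $\cO_M$-convergence implies convergence in $C^\infty(\R)$, and split the supremum defining $\|(h_{[n]}-g)f\|_m$ into $|x|\le R$ (where local uniform convergence of the derivatives applies) and $|x|>R$ (where $(1+x^2)^{m+k}\sup_{i\le m}|f^{(i)}(x)|$ is small uniformly in $n$ because $f\in\cS(\R)$).

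Apart from this, your proof is correct, and your treatment of the hard implication is genuinely different from the paper's. The paper proves (1)$\Rightarrow$(2): it shows $\{h_{[n]}\}$ is Cauchy in $C^\infty(\R)$ by localizing with cutoffs, identifies the limit $g$ as a multiplier through $gf=Pf$, runs the sliding hump to get boundedness of $\{h_{[n]}\}$ in $\cO_M(\R)$, deduces equicontinuity of $\{(M_h)_{[n]}\}$ on $\cO_M(\R)$ via \eqref{eq.Prod}, and finally upgrades convergence on the dense subspace $\cS(\R)$ to convergence on $\cO_M(\R)$. Your (1)$\Rightarrow$(3) --- sliding hump for boundedness, then the elementary pointwise computation showing $|h|\le 1$ and $h_{[n]}\to\chi_{h^{-1}(1)}$ pointwise, then Montel compactness plus uniqueness of the cluster point --- is shorter, avoids both the equicontinuity and the density arguments, and yields as a by-product that the limit is $0$ unless $h\equiv\mathbf{1}$ (a smooth cluster point agreeing with an indicator function forces $h^{-1}(1)$ to be empty or all of $\R$), a fact the paper only extracts later in Proposition \ref{P.Funzioneh}. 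One small caution there: ``subsequence criterion'' is not the right phrase in a non-metrizable space; the fact you need (and which is true) is that a sequence contained in a compact Hausdorff set and having a unique cluster point converges to it, proved by the finite-intersection-property argument applied to a subsequence escaping a neighborhood of the candidate limit, not by extracting convergent subsequences.
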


\begin{proof} (1)$\Leftrightarrow$(1)' follows from \cite[Lemma 2.1]{ABR-1}, after having observed that $\cM_h=M_h'$ and $\cS(\R)$ is a reflexive lcHs (and hence, $\cS'(\R)$ is a barrelled reflexive lcHs).
	
(2)$\Leftrightarrow$(2)' follows as above.	
	
	We now establish the equivalence between properties (2) and (3).
	
	(2)$\Rightarrow$(3). The operator $M_h$ is mean ergodic on $\cO_M(\R^N)$. Accordingly, there exists $P\in \cL(\cO_M(\R))$ such that $(M_h)_{[n]}\to P$ in $\cL_s(\cO_M(\R))$ as $n\to\infty$. Since the constant function $\mathbf{1}$ belongs to $\cO_{M}(\R)$, it follows that  $M_{[n]}\mathbf{1}=h_{[n]}\to P\mathbf{1}=:g$ in $\cO_M(\R)$.
	
	(3)$\Rightarrow$(2). 
	Fixed $m\in\N$ and $v\in\cS(\R)$ and choosen $v_1,v_2\in \cS(\R)$ and $m'\in\N$ as in \eqref{eq.Prod}, we obtain for every $f\in\cO_M(\R)$ and $n\in\N$ that
	\begin{align} \label{eq.CB}
	 p_{m,v}((M_h)_{[n]}f-fg)&=p_{m,v}(h_{[n]}f-fg)=p_{m,v}((h_{[n]}-g)f)\nonumber\\
	 &\leq p_{m',v_1}(h_{[n]}-g)p_{m',v_2}(f).
	\end{align}
	Since $h_{[n]}\to g$ in $\cO_{M}(\R)$ and $m\in\N$, $v\in\cS(\R)$ and $f\in\cO_M(\R)$ are arbitrary, from \eqref{eq.CB} it follows that $(M_h)_{[n]}f\to fg$ in $\cO_M(\R)$ as $n\to\infty$. This means that $M_h$ is mean ergodic on $\cO_M(\R)$ and hence, uniformly mean ergodic, being $\cO_M(\R)$  a Montel lcHs. 
	
Actually, 	 from \eqref{eq.CB} it directly  follows  that $M_h$ is uniformly mean ergodic on $\cO_M(\R)$. Indeed, for a fixed bounded subset $B$ of $\cO_M(\R)$, we obtain by \eqref{eq.CB} for every $n\in\N$ that
\[
\sup_{f\in B}p_{m,v}((M_h)_{[n]}f-fg)\leq p_{m',v_1}(h_{[n]}-g)\sup_{f\in B}p_{m',v_2}(f),
\] 
where $\sup_{f\in B}p_{m',v_2}(f)<\infty$ and $p_{m',v_1}(h_{[n]}-g)\to 0$ as $n\to\infty$.

	(3)$\Rightarrow$(1). Let $f\in \cS(\R)$ and $m\in\N$ be fixed. Then there exists $v\in \cS(\R)$ such that 
	\begin{equation}\label{eq.AulF}
	\sup_{0\leq j\leq m}(1+x^2)^m|f^{(j)}(x)|\leq v(x)
	\end{equation} 
	for every $x\in\R$ (see \cite[Lemma 3.6, p. 127]{Ch}). Thus, we obtain for every $n\in\N$ that
	\begin{align*}
	& ||(M_h)_{[n]}f-gf||_m=\sup_{x\in\R}\sup_{0\leq i\leq m}(1+x^2)^m|[(h_{[n]}(x)-g(x))f(x)]^{(i)}|\\
	& \leq \sup_{x\in\R}\sup_{0\leq i\leq m}(1+x^2)^m\sum_{j=0}^i\binom{i}{j}|(h_{[n]}(x)-g(x))^{(j)}||f^{(j-i)}(x)|\\
	&\leq 2^m\sup_{x\in\R}\sup_{0\leq i\leq m}v(x)|(h_{[n]}(x)-g(x))^{(i)}|=p_{m,v}(h_{[n]}-g).
	\end{align*}
	Since $h_{[n]}\to g$ in $\cO_M(\R)$ and  $m\in\N$ is arbitrary, this implies that $(M_h)_{[n]}f\to gf$ in $\cS(\R)$ as $n\to\infty$. As $f\in\cS(\R)$ is also arbitary, we can  conclude that $M_h$ is mean ergodic when it acting on $\cS(\R)$ and hence, uniformly mean ergodic, being $\cS(\R)$  a Montel Fr\'echet space.

To complete the proof, it remains to show that (1)$\Rightarrow$(2).

(1)$\Rightarrow$(2). Since $\cS(\R)$ is continuously included in $C^\infty(\R)$, the assumption implies that $(M_h)_{[n]}f\to Pf$ in $C^\infty(\R)$ too, as $n\to\infty$, whenever $f\in \cS(\R)$.
This necessarily implies  that $\{h_{[n]}\}_{n\in\N}$ converges to some $g$ in $C^\infty(\R)$. Indeed, fixed a compact subset $K$ of $\R$ and choosen $f\in \cD(\R)$ such that $f(x)=1$ for every $x\in K$, we have for every $l,m,n\in\N$ that 
\begin{align}\label{eq.DD}
&\sup_{x\in K}\sup_{0\leq i\leq l}|(h_{[n]}(x)-h_{[m]}(x))^{(i)}|=\sup_{x\in K}\sup_{0\leq i\leq l}|[(h_{[n]}(x)-h_{[m]}(x))f(x)]^{(i)}|\nonumber\\
&\leq \sup_{x\in K}\sup_{0\leq i\leq l}|(h_{[m]}(x)f(x)-Pf(x))^{(i)}|+\sup_{x\in K}\sup_{0\leq i\leq l}|(h_{[n]}(x)f(x)-Pf(x))^{(i)}|\\
&=:a_m+a_n.\nonumber
\end{align} 
Since $(M_h)_{[n]}f=h_{[n]}f\to Pf$ in $C^\infty(\R)$ as $n\to\infty$, we get that  $a_m+a_n\to 0$ as $m,n\to\infty$. Hence, it follows via \eqref{eq.DD} that  $\sup_{x\in K}\sup_{0\leq i\leq l}|(h_{[n]}(x)-h_{[m]}(x))^{(i)}|\to 0$ as $m,n\to\infty$. So,  as $K$ is an arbitrary compact subset of $\R$, this shows that  
$\{h_{[n]}\}_{n\in\N}$ is a Cauchy sequence in $C^\infty(\R)$. Accordingly, $h_{[n]}\to g$ in $C^\infty(\R)$ as $n\to \infty$.

The facts that $(M_h)_{[n]}f=h_{[n]}f\to Pf$ and $(M_h)_{[n]}f=h_{[n]}f\to gf$ in $C^\infty(\R)$ as $n\to\infty$ for every $f\in \cS(\R)$, imply that $gf=Pf\in \cS(\R)$ for every $f\in \cS(\R)$. Therefore, $g\in \cO_M(\R)$.

The assumption also implies  that $\{h_{[n]}\}_{n\in\N}$ is a bounded sequence of $\cO_M(\R)$. Otherwise,  there exists $m\in\N$ such that the sequence $\{h_{[n]}\}_{n\in\N}$ is not bounded in $\cup_{k=1}^\infty\{f\in C^\infty(\R)\colon |f|_{m,k}=\sup_{x\in\R}\sup_{0\leq i\leq m}(1+x^2)^{-k}|f^{(i)}(x)|<\infty\}$. So, for every $k\in\N$ we have
\[
\sup_{n\in\N}|h_{[n]}|_{m,k}=\infty.
\]
Consequently, taking into account that the sequence $\{h_{[n]}\}_{n\in\N}$ is bounded in $C^\infty(\R)$, we can find a sequence $\{x_k\}_{k\in\N}\su \R$ with $|x_k|>|x_{k-1}|+2$, for $k\geq 2$, an increasing sequence $\{n_k\}_{k\in\N}$ of positive integers and a sequence $\{i_k\}_{k\in\N}\su \{0,1,\ldots,m\}$ such that 
\begin{equation}\label{eq.CC}
(1+x^2_k)^{-k}|h_{[n_k]}^{(i_k)}(x_k)|>k, \quad k\in\N.
\end{equation}
Since the set $\{0,1,\ldots,m\}$ is finite, we can assume without loss of generality (indeed, it suffices to pass to a subsequence) that \eqref{eq.CC} is valid for every $k\in\N$ with the same index $i$ in $\{0,1,\ldots,m\}$. Now, let $\rho\in\cD(\R)$ such that ${\rm supp}\, \rho \su ]-2,2[$ and $\rho(x)=1$ for $x\in [-1,1] $. Then  the function
\[
\rho(x)=\sum_{k=1}^\infty\frac{\rho(x-x_k)}{(1+x^2_k)^k},\quad x\in\R, 
\]
belongs to $\cS(\R)$, see, f.i., \cite[Proposition 4, Chap. 4 \S 11]{Ho}. Since $M_h$ is mean ergodic in  $\cS(\R)$, we have that $(M_h)_{[n]}\rho =h_{[n]}\rho\to P\rho=g\rho$ in $\cS(\R)$ and hence, the sequence $\{h_{[n]}\rho\}_{n\in\N}$ is bounded in $\cS(\R)$. This implies that 
\begin{equation}\label{eq.CCC}
\sup_{n\in\N}\sup_{x\in\R}|(h_{[n]}(x)\rho(x))^{(i)}|=:C<\infty.
\end{equation}
But, by \eqref{eq.CC} we have
\[
 \sup_{k\in\N}\sup_{|x-x_k|<1}|(h_{[n_k]}(x)\rho(x))^{(i)}|=\sup_{k\in\N}\sup_{|x-x_k|<1}\left|\frac{h_{[n_k]}^{(i)}(x)}{(1+x_k^2)^k}\right|\geq \sup_{k\in\N}\left|\frac{h_{[n_k]}^{(i)}(x_k)}{(1+x_k^2)^k}\right| =+\infty.
\]
This is a contradiction with \eqref{eq.CCC}.

So,   $\{h_{[n]}\}_{n\in\N}$ is  a  bounded sequence in $\cO_M(\R)$. Accordingly, for each $v\in \cS(\R)$ and $m\in\N$ there exists $C_{m,v}>0$ such that 
\begin{equation}\label{eq.BB}
\sup_{n\in\N}p_{m,v}(h_{[n]})\leq C_{m,v}.
\end{equation}
This together with \eqref{eq.Prod} clearly implies that $\{(M_h)_{[n]}\}_{n\in\N}$ is an equicontinuous set in $\cL(\cO_M(\R))$. Indeed, fixed $v\in\cS(\R)$ and $m\in\N$ and choosen $v_1,v_2\in\cS(\R)$ and $m'\in\N$ as in \eqref{eq.Prod}, we have for every $n\in\N$ and $f\in \cO_M(\R)$ that 
\[
p_{m,v}((M_h)_{[n]}f)=p_{m,v}(h_{[n]}f)\leq p_{m',v_1}(h_{[n]})p_{m',v_2}(f)\leq C_{m',v_1}p_{m',v_2}(f).
\]
Since $\{(M_h)_{[n]}\}_{n\in\N}$ is an equicontinuous set in $\cL(\cO_M(\R))$ and $(M_h)_{[n]}f\to Pf(=gf)$ in $\cS(\R)$ as $n\to\infty$ (and hence in $\cO_M(\R)$),  and $\cS(\R)$ is a dense subspace of  $\cO_M(\R)$, we deduce  that $(M_h)_{[n]}f\to Pf(=gf)$ in $\cO_M(\R)$ as $n\to\infty$, for every $f\in  \cO_M(\R)$.	So, $M_h$ is mean ergodic on $\cO_M(\R)$ and hence, uniformly mean ergodic, being $\cO_M(\R)$ a Montel lcHs.
\end{proof}

We now point out the following fact which could be useful for applications.

\begin{prop}\label{P.Funzioneh} Let $h\in\cO_M(\R)$ with $h\not={\bf 1}$. If the multiplication operator $M_h\colon \cS(\R)\to\cS(\R)$ is either power bounded or mean ergodic, then $||h||_0=\sup_{x\in\R}|h(x)|\leq 1$ and $h^{-1}(1)$ is an empty subset of $\R$. Moreover, the operator $P=\lim_{n\to\infty}(M_h)_{[n]}=0$  and $\cS(\R)=\overline{{\rm Im}\, M_{1-h}}$.
	\end{prop}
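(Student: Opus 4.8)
The plan is to reduce both hypotheses to the single case of mean ergodicity and then read off all four conclusions from the limit projection. Since $\cS(\R)$ is a Montel (in particular reflexive) Fr\'echet space, every power bounded operator on it is mean ergodic by the results recalled in Section~2; hence I may assume throughout that $M_h$ is mean ergodic on $\cS(\R)$. By Theorem~\ref{T.MeanERg} this means $h_{[n]}\to g$ in $\cO_M(\R)$ for some $g\in\cO_M(\R)$, and, as in the proof of that theorem, $(M_h)_{[n]}f=h_{[n]}f\to gf$ in $\cS(\R)$ for every $f\in\cS(\R)$. Thus the ergodic projection is $P=\lim_{n}(M_h)_{[n]}=M_g$, the multiplication operator by $g$.

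The crux is to identify $g$. Because $\cS(\R)$ is barrelled, $P$ is a projection, so $M_g^2=M_g$; since $M_g^2 f=g^2 f$, evaluating the identity $g^2 f=gf$ (valid for all $f\in\cS(\R)$) at a point $x_0$ with a test function $\varphi\in\cD(\R)$ satisfying $\varphi(x_0)=1$ gives $g^2(x_0)=g(x_0)$, so $g^2=g$ pointwise and $g(x)\in\{0,1\}$ for all $x$. As $g$ is continuous and $\R$ is connected, either $g\equiv 0$ or $g\equiv 1$. If $g\equiv 1$, then $P=I$ and ${\rm Im}\,P=\ker(I-M_h)=\cS(\R)$; but $(1-h)f=0$ for all $f\in\cS(\R)$ forces $h\equiv\mathbf{1}$, contradicting the hypothesis $h\neq\mathbf{1}$. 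Therefore $g\equiv 0$, which gives $P=0$ at once. Combining this with the ergodic decomposition $\cS(\R)=\overline{{\rm Im}\,(I-M_h)}\oplus\ker(I-M_h)$ and the identity $\ker P=\overline{{\rm Im}\,(I-M_h)}$ recalled in Section~2, and using $I-M_h=M_{1-h}$, we obtain $\cS(\R)=\overline{{\rm Im}\,M_{1-h}}$.

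The two remaining pointwise assertions then follow from $g\equiv 0$ together with the fact that convergence in $\cO_M(\R)$ implies pointwise convergence via the continuous inclusion $\cO_M(\R)\hookrightarrow C^\infty(\R)$. For $h^{-1}(1)=\es$: if $h(x_0)=1$ then $h_{[n]}(x_0)=1$ for all $n$, so $g(x_0)=\lim_n h_{[n]}(x_0)=1\neq 0$, a contradiction. For $\|h\|_0\le 1$: from $\frac{h^n}{n}=h_{[n]}-\frac{n-1}{n}h_{[n-1]}\to g-g=0$ in $\cO_M(\R)$, evaluation at a fixed $x$ gives $\frac{|h(x)|^n}{n}\to 0$, which forces $|h(x)|\le 1$; taking the supremum over $x\in\R$ yields $\|h\|_0\le 1$.

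The main obstacle is the middle step: one has to notice that the ergodic projection is itself a multiplication operator, so that its idempotency collapses to the scalar equation $g^2=g$, and then use continuity together with the connectedness of $\R$ to force $g$ to be constant equal to $0$ or $1$, finally ruling out the value $1$ through the standing assumption $h\neq\mathbf{1}$. Once $g\equiv 0$ is secured, everything else is immediate.
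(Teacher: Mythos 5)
Your proof is correct, but it follows a genuinely different route from the paper's. The paper first proves $\|h\|_0\le 1$ by spectral means: the hypothesis gives $\frac{M_h^n}{n}\to 0$ in $\cL_s(\cS(\R))$, so Proposition \ref{P.limite} from the Appendix yields $\sigma(M_h)\subseteq\overline{\D}$, which combined with the inclusion ${\rm Im}\,h\subseteq\sigma(M_h)$ of Theorem \ref{T.Spectra} gives the bound; this bound is then \emph{needed} to compute the pointwise limit of $h_{[n]}$ via the geometric-sum formula $h_{[n]}(x)=\frac{h(x)}{n}\frac{1-h^n(x)}{1-h(x)}$ (for $h(x)\neq 1$), showing that a nonempty $h^{-1}(1)$ would force the $\cO_M(\R)$-limit $g$ to be the discontinuous indicator of $h^{-1}(1)$; only at the end does the paper conclude $g=0$ and $P=0$. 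You invert this order and bypass the spectral machinery entirely: after reducing to the mean ergodic case (as the paper also implicitly does, via the Montel property of $\cS(\R)$), you observe that the ergodic limit is itself a multiplication operator $P=M_g$, exploit the idempotency of the ergodic projection on the barrelled space $\cS(\R)$ to get the scalar identity $g^2=g$, and use continuity and connectedness of $\R$ to force $g\equiv 0$ or $g\equiv 1$, the latter being excluded by ${\rm Im}\,P=\ker(I-M_h)$ and $h\neq\mathbf{1}$. With $g\equiv 0$ in hand, $h^{-1}(1)=\emptyset$ and $\|h\|_0\le 1$ follow by elementary pointwise evaluation (the latter from the Cesàro identity $\frac{h^n}{n}=h_{[n]}-\frac{n-1}{n}h_{[n-1]}\to 0$), and the density statement comes from $\ker P=\overline{{\rm Im}\,(I-M_h)}$ exactly as in the paper. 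Your argument is more self-contained, using only Theorem \ref{T.MeanERg} and the general facts on mean ergodic projections recalled in Section 2, whereas the paper's argument, though heavier, exhibits the connection with the spectral results it has just established; both are complete proofs.
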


\begin{proof} The assumption implies that $\frac{M_h^n}{n}\to 0$ in $\cL_s(\cS(\R))$ and hence, by Proposition \ref{P.limite},  we have $\sigma(M_h)\su \overline{\D}$. On the other hand, by Theorem \ref{T.Spectra}, ${\rm Im}\,h\su \sigma(M_h)$. So, it follows that  $||h||_0\leq 1$.
	

Suppose that $h^{-1}(1)\not=\emptyset$. Since $||h||_0\leq 1$ and 
\[
h_{[n]}(x)=\frac{1}{n}\sum_{m=1}^nh^m(x)=\left\{
\begin{array}{cc}
\frac{h(x)}{n}\frac{1-h^n(x)}{1-h(x)},\ & {\rm if } \ h(x)\not=1,\\
1, \ & {\rm if } \ h(x)=1,\\
\end{array}\right.
\]
for every $n\in\N$, 
it follows that
\[
\lim_{n\to\infty}h_{[n]}(x)=\left\{
\begin{array}{cc}
0,\ & {\rm if } \ h(x)\not=1,\\
1, \ & {\rm if } \ h(x)=1.\\
\end{array}\right.
\]
This is a contradiction with the fact that by Theorem \ref{T.MeanERg} the sequence $\{h_{[n]}\}_{n\in\N}$ converges to some function $g$ in $\cO_M(\R)$, where $g$ is obviuosly a continuous function on $\R$.

Since $h^{-1}(1)$ is an empty subset of $\R$, the  sequence $\{h_{[n]}\}_{n\in\N}$ necessarily converges to 0 in $\cO_M(\R)$ and hence, $P=0$. Accordingly,
$
\cS(\R)=\overline{{\rm Im}\, (I-M_h)}=\overline{{\rm Im}\, M_{1-h}}$.
	\end{proof}

In view of Proposition \ref{P.Funzioneh} we can now collect some examples.

\begin{ex} For a fixed $n\in\N$, we observe that the  function  $f_n(x)=\frac{1}{(1+x)^n}$, $x\not=-1$ belongs to  $C^\infty(\R\setminus\{-1\})$ and 
	\begin{equation}\label{eq.nfun}
	f_n^{(j)}(x)=\frac{(-1)^jn(n+1)\ldots (n+j)}{(1+x)^{n+j}}, \quad j\in\N, \ x\not=-1,
	\end{equation}
	as it is easy to verify. 
	
	(a) Consider the function $h(x):=\frac{1}{1+ae^{ix}}$, for $x\in\R$, with $|a|<1$. If we set $g(x):=ae^{ix}$ for $x\in\R$, then $h^n=f_n\circ g$ for every $n\in\N$. Therefore, by the Fa\`a Bruno formula and \eqref{eq.nfun}, we have for every $x\in\R$ and $j,n\in\N$ that 
	\[
	(h^n(x))^{(j)}=\sum\frac{j!}{k_1!\ldots k_j!}\frac{(-1)^kn(n+1)\ldots (n+k)}{(1+ae^{ix})^{n+k}}\frac{a^ji^je^{ijx}}{(1!)^{k_1}(2!)^{k_2}\ldots (j!)^{k_j}}, 
	\]
	where $k=k_1+k_2+\ldots+ k_j$ and $k_1+2k_2+\ldots+j k_j=j$. Accordingly, we have for every $x\in\R$ and $j,n\in\N$ that 
	\begin{align*}
	|(h^n(x))^{(j)}|&\leq \sum\frac{j!}{k_1!\ldots k_j!}\frac{n(n+1)\ldots (n+j)}{(1-|a|)^{n}}\frac{|a|^j}{(1!)^{k_1}(2!)^{k_2}\ldots (j!)^{k_j}}\\
	&=C_j\frac{n(n+1)\ldots (n+j)|a|^j}{(1-|a|)^n},
	\end{align*}
	being $C_j:=\sum\frac{j!}{k_1!\ldots k_j!}\frac{1}{(1!)^{k_1}(2!)^{k_2}\ldots (j!)^{k_j}}$. It follows for every $v\in\cS(\R)$ and  $l,n\in\N$ that 
	\[
	p_{l,v}(h^n)\leq C||v||_0\frac{n(n+1)\ldots (n+l)}{(1-|a|)^n}\leq CD||v||_0,
	\]
	where $C:=\sup_{j=0,\ldots, l} C_j<\infty$ and $D:=\sup_{n\in\N}\frac{n(n+1)\ldots (n+l)}{(1-|a|)^n}<\infty$. This means that $\{h^n\}_{n\in\N}$ is a bounded sequence of $\cO_M(\R)$. 
	
	(b) Consider the function $h(x):=\frac{a}{1+x^2}$ for $x\in\R$, with $|a|<1$. If we set $g(x):=x^2$ for $x\in\R$, then $h^n=a^nf_n\circ g$ for every $n\in\N$. Therefore, by the Fa\`a Bruno formula and \eqref{eq.nfun} taking into account that $g^{(j)}(x)=0$ for $j>2$, we get for every $x\in\R$ and $j,n\in\N$ that 
	\[
	(h^n(x))^{(j)}=a^n\sum\frac{j!}{k_1!k_2!}\frac{(-1)^kn(n+1)\ldots (n+k)}{(1+x^2)^{n+k}}(2x)^{k_1}. 
	\]
	Accordingly, we have for every $x\in\R$ and $j,n\in\N$ that 
	\[
	|(h^n(x))^{(j)}|\leq |a|^n\frac{n(n+1)\ldots (n+j)}{(1+x^2)^n}\sum\frac{j!}{k_1!k_2!}|2x|^{k_1}
	\]
	If we set $q_j(x):=\sum\frac{j!}{k_1!k_2!}|2x|^{k_1}$ for $x\in\R$, it follows for every  $l,n\in\N$ that 
	\[
	p_{l,v}(h^n)\leq C|a|^n n(n+1)\ldots (n+l)\leq CD,
	\]
	where $C:=\sup_{x\in\R}\sup_{j=0,\ldots,l}\frac{q_j(x)}{(1+x^2)^n}|v(x)|<\infty$ and $D:=\sup_{n\in\N}|a|^n n(n+1)\ldots (n+l)<\infty$. This means that the sequence $\{h^n\}_{n\in\N}$ is bounded in $\cO_M(\R)$.
	
	(c) In a similar way one shows that the sequence of the $n$-th powers of  the following functions is  bounded in $\cO_M(\R)$: $h(x):=ae^{-x^2}$, $k(x):=ae^{ix}$ and $s(x):=\frac{x}{1+x^2}$, for  $x\in\R$, with $|a|<1$. 
\end{ex}

A similar characterization of the power boundedness and mean ergodicity is valid for multiplication operators acting on  $C^\infty(\R)$. Indeed, we have

\begin{prop}\label{P.CInftyM} Let $h\in C^\infty(\R)$. Then the following properties are satisfied.
	\begin{enumerate}
		\item $M_h\colon C^\infty(\R)\to C^\infty(\R)$  is power bounded if and only if $\{h^n\}_{n\in\N}$ is a bounded sequence in $C^\infty(\R)$.
		\item $M_h\colon C^\infty(\R)\to C^\infty(\R)$  is mean ergodic if and only if $\{h_{[n]}\}_{n\in\N}$ is a convergent sequence in $C^\infty(\R)$.
	\end{enumerate}
	\end{prop}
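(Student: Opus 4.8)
The plan is to recall the topological structure of $C^\infty(\R)$ and then mimic the key portions of the arguments already developed for $\cO_M(\R)$ in Theorems \ref{T.PowerB} and \ref{T.MeanERg}, discarding the polynomial weights $(1+x^2)^{-k}$ that encode the $\cO_M$-growth condition. Recall that $C^\infty(\R)$ carries the Fréchet topology given by the seminorms $q_{K,m}(f):=\sup_{x\in K}\sup_{0\le i\le m}|f^{(i)}(x)|$, where $K$ ranges over compact subsets of $\R$ and $m\in\N_0$. As in Section 4, for $f\in C^\infty(\R)$ and $n\in\N$ we have $M_h^n f = h^n f$ and $(M_h)_{[n]}f = h_{[n]}f$, where $h_{[n]}=\frac1n\sum_{m=1}^n h^m$; these identities reduce everything to pointwise products.

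For part (1), the ``only if'' direction is immediate: the constant function $\mathbf 1\in C^\infty(\R)$ satisfies $M_h^n\mathbf 1 = h^n$, so power boundedness forces $\{h^n\}_{n}$ to be bounded. For ``if'', I would fix a compact set $K$ and an order $m$, and estimate $q_{K,m}(h^n f)$ by the Leibniz rule: writing $(h^n f)^{(i)}=\sum_{j=0}^i\binom{i}{j}(h^n)^{(j)}f^{(i-j)}$, I bound each factor by $q_{K,m}(h^n)$ and $q_{K,m}(f)$, obtaining $q_{K,m}(M_h^n f)\le 2^m\,q_{K,m}(h^n)\,q_{K,m}(f)$. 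Since $\{h^n\}$ is bounded, $\sup_n q_{K,m}(h^n)<\infty$, and equicontinuity of $\{M_h^n\}$ in $\cL(C^\infty(\R))$ follows. Note that this direction is actually \emph{cleaner} than the $\cO_M(\R)$ case, because the seminorms on $C^\infty(\R)$ are already the ``right'' ones and no projective-limit/(LB)-space bookkeeping is needed to pass from boundedness of $\{h^n\}$ to the product estimate; in particular the delicate contradiction argument via the bump-function series from Theorem \ref{T.PowerB} is not required here.

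For part (2), the ``only if'' direction again follows by testing on $\mathbf 1$, giving $(M_h)_{[n]}\mathbf 1 = h_{[n]}\to P\mathbf 1$ in $C^\infty(\R)$. For ``if'', suppose $h_{[n]}\to g$ in $C^\infty(\R)$; fixing $K$ and $m$ and applying the same Leibniz estimate to $(h_{[n]}-g)f$ gives $q_{K,m}\big((M_h)_{[n]}f - gf\big)\le 2^m\,q_{K,m}(h_{[n]}-g)\,q_{K,m}(f)\to 0$, so $(M_h)_{[n]}f\to M_g f$ for every $f$, i.e.\ $M_h$ is mean ergodic with limit $M_g$. The main thing to be careful about is that $C^\infty(\R)$ is a Montel (indeed nuclear Fréchet) space, so mean ergodicity in $\cL_s$ automatically upgrades to uniform mean ergodicity in $\cL_b$; in fact the estimate above already yields uniform convergence on bounded subsets of $C^\infty(\R)$ directly, by taking the supremum over $f$ in a bounded set $B$ and using $\sup_{f\in B}q_{K,m}(f)<\infty$. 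The only genuine point requiring attention, and the one I would flag as the subtle step, is verifying that the limit seminorm estimates are uniform in the compact set $K$ in the right sense so that the limit operator $M_g$ is well defined and continuous; this is automatic once $g\in C^\infty(\R)$, which is guaranteed by completeness of $C^\infty(\R)$ together with $h_{[n]}\to g$.
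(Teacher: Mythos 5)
Your proposal is correct and follows essentially the same route as the paper: necessity by testing on the constant function $\mathbf{1}$, and sufficiency via the Leibniz-rule estimate $q_{K,m}(M_h^n f)\leq 2^m\, q_{K,m}(h^n)\, q_{K,m}(f)$ on each compact set $K$ (and the analogous estimate with $h_{[n]}-g$ for mean ergodicity). The extra remarks on Montel upgrading to uniform mean ergodicity and on the continuity of the limit operator $M_g$ are sound but not needed for the statement as given.
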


\begin{proof} (1) If $M_h$ is power bounded, then the sequence $\{M_h^n\textbf{1}\}_{n\in\N}=\{h^n\}_{n\in\N}$ is necessarily bounded in $C^\infty(\R)$.
	
Conversely,  fixed a compact subset $K$ of $\R$ and an integer $m\in\N$, there exists $C_{K,m}>0$ such that
\[
\sup_{n\in\N}\sup_{x\in K}\sup_{0\leq i\leq m}|(h^n(x))^{(i)}|\leq C_{K,m}.
\]
This implies for every $n\in\N$ that 
\begin{align*}
\sup_{x\in K}\sup_{0\leq i\leq m}|(M_h^nf)^{(i)}(x)|& \leq \sup_{x\in K}\sup_{0\leq i\leq m}\sum_{j=0}^i\binom{i}{j}|(h^n(x))^{i-j}||f^{(j)}(x)|\\& \leq 2^mC_{K,m}\sup_{x\in K}\sup_{0\leq i\leq m}|f^{(i)}(x)|.
\end{align*}
Since $K$ and $m$ are arbitrary, this means that the operator $M_h$ is power bounded.

(2) follows in a similar way.
\end{proof}

We now pass to investigate the ergodic properties of the convolution operators acting on $\cS(\R)$. We see that the characterization of such properties follow as 
  consequences of Theorems \ref{T.PowerB} and \ref{T.MeanERg}. But first, we need to observe the following facts.

Iterating the procedures in \eqref{eq.FTC} and in \eqref{eq.FTD},  it follows  for every $n\in\N$,  $f\in\cS(\R)$ and $S\in \cO'_C(\R)$ that
\begin{equation}\label{eq.FTCN}
\cF(C^n_T(f))=\cF(T)^n\hat{f}, \quad \cF(\cC^n_T(S))=\cF(T)^n\cF(S).
\end{equation}
This means  that 
\begin{equation}\label{eq.Compo}
\cF\circ C^n_T=M^n_{\cF(T)}\circ \cF\quad (\cF\circ \cC^n_T=M^n_{\cF(T)}\circ \cF, {\rm \ resp.}), \quad n\in\N.
\end{equation}
If we  set
\[
(\star T)^2:=T\star T,\quad (\star T)^n:=T\star (\star T)^{n-1} \ ({\rm for }\ n\geq 2)
\]
(the definition is well-posed because $T, (\star T)^{n-1}\in \cO'_C(\R)$ and hence $(\star T)^n=T\star (\star T)^{n-1}\in  \cO'_C(\R)$), then by  \eqref{eq.FTD} we get for every $n\in\N$ that  
\begin{equation}\label{eq.StarN}
\cF((\star T)^n)=\cF(T)^n,
\end{equation}
and hence, by \eqref{eq.FTCN} that 
\[
\cF(\cC_T^n(\delta))=\cF(T)^n\cF(\delta)=\cF((\star T)^n)\ \Rightarrow \cC_T^n(\delta)=(\star T)^n,
\]
being $\delta$ the delta of Dirac. 
Since $(\star T)^n\in \cO_C'(\R)$ for every $n\in\N$, we also get for every  $n\in\N$ that  the distribution $(\star T)_{[n]}:=\frac{1}{n}\sum_{m=1}^{n}(\star T)^m\in \cO_C'(\R)$  and that 
\[
(\cC_T)_{[n]}(\delta)=(\star T)_{[n]}.
\]

\begin{prop}\label{P.PBC} Let $T\in \cO_C'(\R)$. Then the following properties are equivalent.
	\begin{enumerate}
		\item[(1)] $C_T$ is power bounded on $\cS(\R)$.
		\item[(1)'] $\cC_T$ is power bounded on $\cS'(\R)$.
		\item[(2)] $C_T$ is power bounded on $\cO_C(\R)$.
		\item[(2)'] $\cC_T$ is power bounded on $\cO'_C(\R)$.
		\item[(3)] The sequence  $\{(\star T)^n\}_{n\in\N}$ is  bounded in $\cO'_C(\R)$.
	\end{enumerate}	
\end{prop}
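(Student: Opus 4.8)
The plan is to transport everything through the Fourier transform to the multiplication operator $M_h$ with $h:=\cF(T)\in\cO_M(\R)$, and then invoke Theorem \ref{T.PowerB}. The two basic conjugacies I would use are, from \eqref{eq.Compo}, the identity $\cF\circ C_T^n=M_h^n\circ\cF$ on $\cS(\R)$ (where $\cF\colon\cS(\R)\to\cS(\R)$ is a topological isomorphism) and the identity $\cF\circ\cC_T^n=M_h^n\circ\cF$ on $\cO_C'(\R)$ (where $\cF\colon\cO_C'(\R)\to\cO_M(\R)$ is a topological isomorphism). Thus $C_T^n=\cF^{-1}M_h^n\cF$ on $\cS(\R)$ and $\cC_T^n=\cF^{-1}M_h^n\cF$ on $\cO_C'(\R)$, so in each case the family $\{C_T^n\}_n$ (resp.\ $\{\cC_T^n\}_n$) is equicontinuous if and only if $\{M_h^n\}_n$ is.

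First I would prove (1)$\Leftrightarrow$(3). Since $\cF\colon\cS(\R)\to\cS(\R)$ is a topological isomorphism, (1) is equivalent to $M_h$ being power bounded on $\cS(\R)$, which by Theorem \ref{T.PowerB} (equivalence of its conditions (1) and (3)) holds if and only if $\{h^n\}_{n}$ is bounded in $\cO_M(\R)$. On the other hand, $\cF\colon\cO_C'(\R)\to\cO_M(\R)$ is a topological isomorphism and, by \eqref{eq.StarN}, $\cF((\star T)^n)=\cF(T)^n=h^n$; hence $\{(\star T)^n\}_{n}$ is bounded in $\cO_C'(\R)$ precisely when $\{h^n\}_{n}$ is bounded in $\cO_M(\R)$. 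This gives (1)$\Leftrightarrow$(3).

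Next I would prove (2)'$\Leftrightarrow$(3). Using the conjugacy $\cC_T^n=\cF^{-1}M_h^n\cF$ on $\cO_C'(\R)$ and the isomorphism $\cF\colon\cO_C'(\R)\to\cO_M(\R)$, condition (2)' is equivalent to $M_h$ being power bounded on $\cO_M(\R)$, which by Theorem \ref{T.PowerB} (equivalence of its conditions (2) and (3)) is again equivalent to $\{h^n\}_{n}$ being bounded in $\cO_M(\R)$, i.e.\ to (3). It then remains to add the two duality equivalences (1)$\Leftrightarrow$(1)' and (2)$\Leftrightarrow$(2)': since $\cS(\R)$ and $\cO_C(\R)$ are reflexive lcHs and $\cC_T=C_T'$ in both dualities, \cite[Lemma 2]{ABR-2} yields that $C_T$ is power bounded on $\cS(\R)$ (resp.\ on $\cO_C(\R)$) if and only if its transpose $\cC_T$ is power bounded on $\cS'(\R)$ (resp.\ on $\cO_C'(\R)$). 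Combining, (1)$\Leftrightarrow$(1)'$\Leftrightarrow$(3)$\Leftrightarrow$(2)'$\Leftrightarrow$(2).

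The proof is essentially bookkeeping, so the only real point of care --- and the step I expect to be the main obstacle --- is keeping straight which operator is conjugate to which multiplication operator through which Fourier isomorphism. In particular, the Fourier transform maps $\cO_C'(\R)$ (not $\cO_C(\R)$) onto $\cO_M(\R)$, so the clean intertwining is available for $\cC_T$ on $\cO_C'(\R)$ rather than for $C_T$ on $\cO_C(\R)$; consequently condition (2) cannot be reached by a direct Fourier conjugacy and must instead be obtained from (2)' by the duality lemma. Once this organization is fixed, each equivalence reduces to an application of a topological isomorphism, Theorem \ref{T.PowerB}, or \cite[Lemma 2]{ABR-2}, with no genuine analytic difficulty remaining.
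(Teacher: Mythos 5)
Your proposal is correct and takes essentially the same route as the paper: Fourier conjugacy \eqref{eq.Compo} and \eqref{eq.StarN} to reduce everything to the multiplication operator $M_{\cF(T)}$, Theorem \ref{T.PowerB} for the multiplication case, and \cite[Lemma 2]{ABR-2} with reflexivity for the equivalences (1)$\Leftrightarrow$(1)' and (2)$\Leftrightarrow$(2)'. The only cosmetic difference is organizational: the paper runs the cycle (1)$\Rightarrow$(2)'$\Rightarrow$(3)$\Rightarrow$(1), getting (2)'$\Rightarrow$(3) by evaluating the orbit at the Dirac delta via $\cC_T^n(\delta)=(\star T)^n$, whereas you prove (1)$\Leftrightarrow$(3) and (2)'$\Leftrightarrow$(3) directly through the Fourier isomorphism $\cF\colon\cO_C'(\R)\to\cO_M(\R)$; the ingredients are identical.
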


\begin{proof} (1)$\Leftrightarrow$(1)' follows from \cite[Lemma 2]{ABR-2}, after having observed that $\cC_T=C_T'$ and  $\cS(\R)$ is a reflexive lcHs.
	
(2)$\Leftrightarrow$(2)' follows by the same arguments above.

(1)$\Rightarrow$(2)'. Since the Fourier transform $\cF$ is a topological isomorphism from $\cS(\R)$ onto itself,  thanks to  \eqref{eq.Compo} the assumption implies that the  operator $M_{\cF(T)}\colon \cS(\R)\to \cS(\R)$  is also power bounded. By Theorem \ref{T.PowerB} this is equivalent to the fact that  the operator  $M_{\cF(T)}\colon \cO_M(\R)\to \cO_M(\R)$ is power bounded. So, it follows via \eqref{eq.Compo} that the convolution operator $\cC_T\colon \cO_C'(\R)\to \cO'_C(\R)$ is power bounded too, taking in account that the Fourier transfom $\cF$ is also a topological isomorphism from $\cO_C'(\R)$ onto $\cO_M(\R)$.  

(2)'$\Rightarrow$(3). The delta  $\delta$ of Dirac  belongs to $\cO'_C(\R)$ and so, by assumption, $\{\cC_T^n(\delta)\}_{n\in\N}=\{(\star T)^n\}_{n\in\N}$ is a bounded sequence of $\cO_C'(\R)$.

(3)$\Rightarrow$(1). The facts that the Fourier transform $\cF$ is a topological isomorphism from $\cO'_C(\R)$ onto $\cO_M(\R)$ and that $\{(\star T)^n\}_{n\in\N}$ is a bounded sequence of $\cO_C'(\R)$ imply via \eqref{eq.StarN} that $\{\cF(T)^n\}_{n\in\N}$ is a bounded sequence of $\cO_M(\R)$.  By Theorem \ref{T.PowerB} it follows that the operator $M_{\cF(T)}\colon \cS(\R)\to \cS(\R)$ is power bounded.  Hence, this implies via \eqref{eq.Compo} that the  operator $C_T\colon \cS(\R)\to \cS(\R)$ is necessarily  power bounded (taking in account that the Fourier transform $\cF$ is a topological isomorphism from $\cS(\R)$ onto itself).	\end{proof}

\begin{prop}\label{P.PBM} Let $T\in \cO_C'(\R)$. Then the following properties are equivalent.
	\begin{enumerate}
		\item[(1)] $C_T$ is (uniformly) mean ergodic on $\cS(\R)$.
		\item[(1)'] $\cC_T$ is (uniformly) mean ergodic on $\cS'(\R)$.
		\item[(2)] $C_T$ is (uniformly) mean ergodic on $\cO_C(\R)$.
		\item[(2)'] $\cC_T$ is (uniformly) mean ergodic on $\cO'_C(\R)$.
		\item[(3)] The sequence  $\{(\star T)_{[n]}\}_{n\in\N}$ is  convergent in  $\cO'_C(\R)$.
	\end{enumerate}	
\end{prop}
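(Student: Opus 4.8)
The plan is to reproduce the scheme of Proposition \ref{P.PBC}, systematically replacing power boundedness by mean ergodicity and Theorem \ref{T.PowerB} by Theorem \ref{T.MeanERg}. Since every space occurring here ($\cS(\R)$, $\cS'(\R)$, $\cO_C(\R)$, $\cO_C'(\R)$) is Montel, mean ergodicity and uniform mean ergodicity coincide throughout, so the parenthetical ``(uniformly)'' requires no separate treatment.

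First I would dispatch the two transpose equivalences. Because $\cC_T=C_T'$ and $\cS(\R)$ is a reflexive (indeed Montel) lcHs, the duality criterion \cite[Lemma 2.1]{ABR-1} already invoked in Theorem \ref{T.MeanERg} gives (1)$\Leftrightarrow$(1)' at once; the same argument applied to the reflexive space $\cO_C(\R)$ yields (2)$\Leftrightarrow$(2)'.

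Next comes the transfer (1)$\Rightarrow$(2)' through the Fourier transform. Conjugating the Ces\`aro means in the intertwining relation \eqref{eq.Compo} with $n=1$ gives $\cF\circ (C_T)_{[n]}=(M_{\cF(T)})_{[n]}\circ \cF$, and since $\cF\colon \cS(\R)\to\cS(\R)$ is a topological isomorphism, mean ergodicity of $C_T$ on $\cS(\R)$ is equivalent to mean ergodicity of $M_{\cF(T)}$ on $\cS(\R)$, where $\cF(T)\in\cO_M(\R)$. By Theorem \ref{T.MeanERg} the latter is equivalent to mean ergodicity of $M_{\cF(T)}$ on $\cO_M(\R)$; transporting this back along the topological isomorphism $\cF\colon \cO_C'(\R)\to\cO_M(\R)$, which intertwines $\cC_T$ with $M_{\cF(T)}$ by \eqref{eq.Compo}, yields that $\cC_T$ is mean ergodic on $\cO_C'(\R)$.

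Finally I would close the cycle through the Dirac delta. For (2)'$\Rightarrow$(3), I evaluate the Ces\`aro means at $\delta\in\cO_C'(\R)$: the identity $(\cC_T)_{[n]}(\delta)=(\star T)_{[n]}$ recorded just before the statement turns convergence of $\{(\cC_T)_{[n]}\}$ in $\cL_s(\cO_C'(\R))$ into convergence of $\{(\star T)_{[n]}\}$ in $\cO_C'(\R)$. For (3)$\Rightarrow$(1), linearity of $\cF$ together with \eqref{eq.StarN} gives $\cF((\star T)_{[n]})=(\cF(T))_{[n]}$, so convergence of $\{(\star T)_{[n]}\}$ in $\cO_C'(\R)$ is carried by the isomorphism $\cF$ to convergence of $\{(\cF(T))_{[n]}\}$ in $\cO_M(\R)$; Theorem \ref{T.MeanERg} then makes $M_{\cF(T)}$ mean ergodic on $\cS(\R)$, and pulling back along $\cF\colon\cS(\R)\to\cS(\R)$ yields mean ergodicity of $C_T$ on $\cS(\R)$. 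I expect no genuine obstacle here; the only points needing care are the bookkeeping identities $(\cC_T)_{[n]}(\delta)=(\star T)_{[n]}$ and $\cF((\star T)_{[n]})=(\cF(T))_{[n]}$, both of which are immediate consequences of the preliminary computations preceding the statement.
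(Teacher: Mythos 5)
Your proposal is correct and follows essentially the same route as the paper's own proof: the duality lemma of \cite{ABR-1} for (1)$\Leftrightarrow$(1)' and (2)$\Leftrightarrow$(2)', the intertwining $\cF\circ (C_T)_{[n]}=(M_{\cF(T)})_{[n]}\circ\cF$ combined with Theorem \ref{T.MeanERg} for (1)$\Rightarrow$(2)', evaluation at $\delta$ for (2)'$\Rightarrow$(3), and the transfer back through $\cF$ for (3)$\Rightarrow$(1). Your preliminary remark that all four spaces are Montel, so that mean ergodicity and uniform mean ergodicity coincide, is a correct observation that the paper leaves implicit.
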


\begin{proof} (1)$\Leftrightarrow$(1)' follows from \cite[Lemma 2.1]{ABR-1}, after having observed that  $\cC_T=C_T'$ and $\cS(\R)$ is a reflexive lcHs space (and hence, $\cS'(\R)$ is barrelled reflexive lcHs).
	
(2)$\Leftrightarrow$(2)' follows by the same arguments above.

(1)$\Rightarrow$(2)'. We first observe that from \eqref{eq.Compo} it follows for every $n\in\N$ that
\begin{equation}\label{eq.MER}
\cF\circ (C_T)_{[n]}=(M_{\cF(T)})_{[n]}\circ \cF, \quad \cF\circ (\cC_T)_{[n]}=(M_{\cF(T)})_{[n]}\circ \cF.
\end{equation}	
Since the Fourier transform $\cF$ is a topological isomorphism from $\cS(\R)$ onto itself, the assumption together with  \eqref{eq.MER} implies  that the operator $M_{\cF(T)}\colon \cS(\R)\to \cS(\R)$ is (uniformly) mean ergodic. By Theorem \ref{T.MeanERg} this is equivalent to the fact that the operator $M_{\cF(T)}\colon \cO_M(\R)\to \cO_M(\R)$ is (uniformly) mean ergodic. So, again via \eqref{eq.MER}, it follows that the convolution operator $\cC_T\colon \cO_C'(\R)\to \cO_C'(\R)$ is (uniformly) mean ergodic, after having observed that the Fourier transform $\cF$ is also a topological isomorphism from $\cO'_C(\R)$ onto $\cO_M(\R)$.
	
(2)'$\Rightarrow$(3). The delta $\delta$ of Dirac belongs to $\cO'_C(\R)$ and so, by assumption, $\{(\cC_T)_{[n]}\delta\}=\{(\star T)_{[n]}\}$ is a convergent sequence in $\cO'_C(\R)$.

(3)$\Rightarrow$(1). The facts that the Fourier transform $\cF$ is a topological isomorphism from $\cO'_C(\R)$ onto $\cO_M(\R)$ and that $\{(\star T)_{[n]}\}_{n\in\N}$ is a convergent sequence in $\cO'_C(\R)$ imply thanks to  \eqref{eq.MER} that $\{\cF(T)_{[n]}\}_{n\in\N}$ is a convergent sequence in $\cO_M(\R)$.  By Theorem \ref{T.MeanERg} it follows that the operator $M_{\cF(T)}\colon \cS(\R)\to \cS(\R)$ is (uniformly) mean ergodic. Hence, this together with  \eqref{eq.MER} implies  that the operator $C_T\colon \cS(\R)\to \cS(\R)$ is necessarily (uniformly) mean ergodic (taking into account that the Fourier transform $\cF$ is a topological isomorphism from $\cS(\R)$ onto itself).
\end{proof}

\begin{rem} Let $T\in \cO_C'(\R)$. If $C_T\colon \cS(\R)\to \cS(\R)$ is either power bounded or mean ergodic, then $||\cF(T)||_0\leq 1$ and $(\cF(T))^{-1}(1)=\emptyset$. Moreover, $(\star T)_{[n]}\to 0$ in $\cO_C'(\R)$ as $n\to\infty$. This  easily follows from Proposition \ref{P.Funzioneh} thanks to the identity $\cF\circ C_T=M_{\cF(T)}\circ \cF$, being $\cF$ a topological isomorphism from $\cO'_C(\R)$ onto $\cO_M(\R)$.
\end{rem}
	
	Finally, we point out that a similar characterization of the power boundedness and mean ergodicity is valid also for convolution operators acting on the strong dual $\cE'(\R)$ of $C^\infty(\R)$. To state and prove the result,  we  observe that the same  arguments at the end of \S 2 show that if $T\in \cE'(\R)$, then  $(\star T)^n$ is well-posed and belongs to $\cE'(\R)$ for any $n\in\N$. Hence, $(\star T)^{[n]}$ is also well posed  and belongs to $\cE'(\R)$ for any $n\in\N$.
	
	\begin{prop}\label{P.CInftyC} Let $T\in \cE'(\R)$. Then the following properties are satisfied.
		\begin{enumerate} 
			\item $\cC_T\colon \cE'(\R)\to \cE'(\R)$ is power bounded if and only if $\{(\star T)^n\}_{n\in\N}$ is a bounded sequence of $\cE'(\R)$.
			\item $\cC_T\colon \cE'(\R)\to \cE'(\R)$ is mean ergodic if and only if $\{(\star T)_{[n]}\}_{n\in\N}$ is a convergent  sequence of $\cE'(\R)$.
		\end{enumerate}
	\end{prop}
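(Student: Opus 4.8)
The plan is to reduce the statement to the multiplication-operator case on $C^\infty(\R)$ via the Fourier transform, exactly mirroring the strategy used for Propositions \ref{P.PBC} and \ref{P.PBM}. The Fourier transform $\cF$ is a topological isomorphism from $\cE'(\R)$ onto the space $\cO_M(\R)$ of slowly increasing functions, and under this isomorphism the convolution operator $\cC_T$ is conjugate to the multiplication operator $M_{\cF(T)}$; that is, $\cF\circ \cC_T = M_{\cF(T)}\circ \cF$, and iterating gives $\cF\circ \cC_T^n = M_{\cF(T)}^n\circ \cF$ for every $n\in\N$, with the analogous identity for the Ces\`aro means. The image $\cF(\cE'(\R))$ is precisely the space of entire functions of exponential type that are slowly increasing on $\R$ (the Paley--Wiener--Schwartz space), so the multiplier $\cF(T)$ lands in $\cO_M(\R)$ and $M_{\cF(T)}$ acts continuously on it.

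First I would record the conjugacy identities, which are immediate from \eqref{eq.FTCN}--\eqref{eq.StarN} applied with $\cE'(\R)$ in place of $\cO_C'(\R)$; the algebraic manipulations there used only that $\cF$ turns convolution into multiplication, which holds for compactly supported distributions as well. In particular $\cC_T^n(\delta)=(\star T)^n$ and $(\cC_T)_{[n]}(\delta)=(\star T)_{[n]}$, and $\cF((\star T)^n)=\cF(T)^n$. Next, for part (1), I would argue that $\cC_T$ is power bounded on $\cE'(\R)$ if and only if $M_{\cF(T)}$ is power bounded on $\cF(\cE'(\R))$, by transporting the defining equicontinuity along the isomorphism $\cF$. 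The forward direction of the characterization then follows by testing on $\delta$: since $\delta\in\cE'(\R)$, power boundedness forces $\{\cC_T^n(\delta)\}_{n\in\N}=\{(\star T)^n\}_{n\in\N}$ to be bounded. For the converse, boundedness of $\{(\star T)^n\}_{n\in\N}$ in $\cE'(\R)$ gives, via $\cF$, boundedness of $\{\cF(T)^n\}_{n\in\N}$ in $\cO_M(\R)$, and then the multiplication-operator estimate \eqref{eq.Prod} (used as in the proof of Theorem \ref{T.PowerB}) yields equicontinuity of $\{M_{\cF(T)}^n\}_{n\in\N}$, hence of $\{\cC_T^n\}_{n\in\N}$. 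Part (2) follows by the same scheme, replacing power boundedness with convergence of the Ces\`aro means and invoking the mean-ergodicity argument of Theorem \ref{T.MeanERg}.

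The main obstacle I anticipate is not the conjugation itself but justifying the functional-analytic transfer cleanly, since $C^\infty(\R)$ is a Fr\'echet space while $\cE'(\R)$ is its strong dual, an (LF)- or (DF)-type space rather than a Fr\'echet space. The earlier dualities $(1)\Leftrightarrow(1)'$ relied on reflexivity of $\cS(\R)$ together with \cite[Lemma 2]{ABR-2} and \cite[Lemma 2.1]{ABR-1}; here I would instead work directly on $\cE'(\R)$, so I must verify that $\cF$ is genuinely a topological isomorphism from $\cE'(\R)$ onto $\cF(\cE'(\R))\subseteq\cO_M(\R)$ carrying bounded sets to bounded sets and equicontinuous families to equicontinuous families. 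Once that is in hand, equicontinuity and Ces\`aro convergence transfer automatically, since a topological isomorphism preserves both notions. The delicate point is therefore ensuring that the relevant multiplication-operator results from Theorems \ref{T.PowerB} and \ref{T.MeanERg}, which were stated for $M_{\cF(T)}$ acting on all of $\cO_M(\R)$, restrict correctly to the closed subspace $\cF(\cE'(\R))$; this is where I would expect to spend the most care, checking that the test function $\delta$ and the product estimate \eqref{eq.Prod} suffice to close both directions without leaving the subspace.
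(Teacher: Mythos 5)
Your two forward implications (testing the orbit of $\delta$) are exactly the paper's, but both converse implications rest on a transfer principle that is false, and this is a genuine gap. You need the equicontinuity (resp.\ Ces\`aro convergence) of $\{M_{\cF(T)}^n\}_{n\in\N}$, established on $\cO_M(\R)$ via Theorems \ref{T.PowerB} and \ref{T.MeanERg}, to pull back through the conjugation $\cF\circ\cC_T^n=M_{\cF(T)}^n\circ\cF$ to equicontinuity (resp.\ Ces\`aro convergence) of $\{\cC_T^n\}_{n\in\N}$ on $\cE'(\R)$. This is legitimate only if $\cF$ is a topological isomorphism of $\cE'(\R)$ onto its image \emph{endowed with the topology induced by} $\cO_M(\R)$ --- precisely the point you say you ``must verify'' and then take for granted. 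It fails: the strong dual topology of $\cE'(\R)$ is strictly finer than the topology induced via $\cF$ from $\cO_M(\R)$ (equivalently, the inclusion $\cE'(\R)\hookrightarrow\cO_C'(\R)$ is continuous but is not a topological embedding). Concretely, let $\delta_a$ denote the Dirac delta at $a\in\R$ and set $S_k:=e^{-k}\delta_k\in\cE'(\R)$. Then $\cF(S_k)(\xi)=e^{-k}e^{-ik\xi}$, so for every $m\in\N$ and $v\in\cS(\R)$ one has $p_{m,v}(\cF(S_k))\le e^{-k}k^m\sup_{\xi\in\R}|v(\xi)|\to0$ as $k\to\infty$, i.e.\ $\cF(S_k)\to0$ in $\cO_M(\R)$; yet pairing with $f(x)=e^{2x}\in C^\infty(\R)$ gives $\langle S_k,f\rangle=e^{k}\to\infty$, so $\{S_k\}_{k\in\N}$ is not even bounded in $\cE'(\R)$. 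Thus sets that are null (hence bounded) for the $\cO_M$-induced topology need not be bounded in $\cE'(\R)$, and no equicontinuity or convergence statement proved on $\cO_M(\R)$ can be transported back through $\cF$. The obstruction is structural: the seminorms of $\cE'(\R)$ are suprema over bounded subsets of $C^\infty(\R)$, whose elements may grow arbitrarily fast and are not tempered distributions, so these seminorms have no Fourier-side counterpart. If instead you equip $\cF(\cE'(\R))$ with the topology transported from $\cE'(\R)$ (making the conjugation tautological), then Theorems \ref{T.PowerB}, \ref{T.MeanERg} and the product estimate \eqref{eq.Prod} simply do not apply to that space, since \eqref{eq.Prod} controls only the $\cO_M$-seminorms.

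This is why, in contrast to Propositions \ref{P.PBC} and \ref{P.PBM} (where $\cF\colon\cO_C'(\R)\to\cO_M(\R)$ and $\cF\colon\cS(\R)\to\cS(\R)$ genuinely are topological isomorphisms), the paper's proof of this proposition makes no use of the Fourier transform at all; that direct route is the repair your argument needs. From boundedness of $\{(\star T)^n\}_{n\in\N}$ in $\cE'(\R)$, Banach--Steinhaus (barrelledness of $C^\infty(\R)$) gives a single compact set $K$, order $m$ and constant $C_{K,m}$ with $|\langle(\star T)^n,f\rangle|\le C_{K,m}\sup_{x\in K}\sup_{0\le i\le m}|f^{(i)}(x)|$ for all $n$ and $f$. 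Writing $((\star T)^n\star f)^{(i)}(x)=\langle(\star T)^n,(\check{\tau_xf})^{(i)}\rangle$, one deduces that $\{(\star T)^n\star f\colon n\in\N,\ f\in B\}$ is bounded in $C^\infty(\R)$ for every bounded $B\su C^\infty(\R)$, and then the identity $\langle\cC_T^n(S),f\rangle=\langle S,\check{(\star T)^n}\star f\rangle$ yields equicontinuity of $\{\cC_T^n\}_{n\in\N}$ directly in the seminorms of $\cE'(\R)$. Part (2) runs the same way, with the uniform constant replaced by the quantity $\sup_{f\in U_{K,m}}|\langle(\star T)_{[n]},f\rangle|$, which tends to $0$ as $n\to\infty$.
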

	
\begin{proof} (1) If $\cC_T$ is power bounded, then the sequence $\{\cC^n_T(\delta)\}_{n\in\N}=\{(\star T)^n\}_{n\in\N}$ is necessarily bounded in $\cE'(\R)$.
	
	Conversely, there exist a compact subset $K$ of $\R$,  $m\in\N$ and $C_{K,m}>0$ such that
	\[
	\sup_{n\in\N}\sup_{f\in U_{K,m}}|\langle (\star T)^n, f\rangle|\leq C_{K,m},
	\]
	where $U_{K,m}:=\{f\in C^\infty(\R)\colon ||f||_{K,m}:=\sup_{x\in K}\sup_{0\leq i\leq m}|f^{(i)}(x)|\}$. So,  it follows for every $i,n\in\N$ and $f\in C^\infty(\R)$ that
	\[
	| ((\star T)^n\star f)^{(i)}(x)|=|\langle (\star T)^n, (\check{\tau_x f})^{(i)}\rangle|\leq C_{K,m}||(\check{\tau_x f})^{(i)}||_{K,m}, \quad x\in\R.
	\]
	Therefore, for a fixed compact subset $H$ of $\R$ and $l\in\N$, this yields for every $n\in\N$ and $f\in C^\infty(\R)$ that 
	\[
	\sup_{x\in H}\sup_{0\leq i\leq l}| ((\star T)^n\star f)^{(i)}(x)|\leq C_{K,m}\sup_{z\in H-K}\sup_{0\leq i\leq l+m}|f^{(i)}(z)|.
	\]
Since $H$ and $l\in\N$ are arbitrary, this implies that the set $B^*:=\{(\star T)^n\star f\colon f\in B \}$ is bounded in $C^\infty(\R)$ whenever $B$ is a bounded subset of  $C^\infty(\R)$. Accordingly, for   a fixed  bounded subset $B$ of $C^\infty(\R)$ there exists $B^*$ bounded subset of $C^\infty(\R)$ such that every $n\in\N$ and $S\in \cE'(\R)$ we have
\[
\sup_{f\in B}|\langle \cC_T^n(S),f\rangle|=\sup_{f\in B}\langle S, \check{(\star T)^n}\star f\rangle|\leq \sup_{g\in B^*}|\langle S, g\rangle |, 
\]
i.e., $\cC_T$ is power bounded.

(2) If $\cC_T$ is mean ergodic, then the sequence $\{(\cC_T)_{[n]}(\delta)\}_{n\in\N}=\{(\star T)_{[n]}\}_{n\in\N}$ is convergent in $\cE'(\R)$.

	Conversely, there exist $T_0\in \cE'(\R)$ (say, $T_0=0$), a compact subset $K$ of $\R$ and  $m\in\N$ such that
\[
||(\star T)_{[n]}||'_{K,m}:=\sup_{f\in U_{K,m}}|\langle (\star T)^n, f\rangle|\to 0.
\]
So, arguing as in part (1), it follows for every $i,n\in\N$ and $f\in C^\infty(\R)$ that
\[
| ((\star T)_{[n]}\star f)^{(i)}(x)|\leq ||(\star T)_{[n]}||'_{K,m}||(\check{\tau_x f})^{(i)}||_{K,m}, \quad x\in\R.
\]
Therefore, for a fixed compact subset $H$ of $\R$ and $l\in\N$, this yields for every $n\in\N$ and $f\in C^\infty(\R)$ that 
\begin{equation}\label{eq.CONV}
\sup_{x\in H}\sup_{0\leq i\leq l}| ((\star T)_{[n]}\star f)^{(i)}(x)|\leq ||(\star T)_{[n]}||'_{K,m}\sup_{z\in H-K}\sup_{0\leq i\leq l+m}|f^{(i)}(z)|.
\end{equation}
Since $H$ and $l\in\N$ are arbitrary, this impies that $(\star T)_{[n]}\star f\to 0$ in $C^\infty(\R)$ as $n\to\infty$ whenever $f\in C^\infty(\R)$. 

Now, let $S\in \cE'(\R)$. Then there exist a compact subset $H$ of $\R$, $l\in\N$ and $C>0$ such that 
\begin{equation}\label{eq.StimaC}
|\langle S,f\rangle |\leq C \sup_{x\in H}\sup_{0\leq i\leq l}|f^{(i)}(x)|
\end{equation}
for every $f\in C^\infty(\R)$. So, combining   \eqref{eq.CONV} with \eqref{eq.StimaC} we obtain for every $n\in\N$ and $f\in C^\infty(\R)$ that 
\[
|\langle (\cC_T)_{[n]}(S),f\rangle|=|\langle S, \check{(\star T)_{[n]}}\star f\rangle|\leq C ||(\star T)_{[n]}||'_{K,m}\sup_{z\in H-K}\sup_{0\leq i\leq l+m}|f^{(i)}(z)|.
\]
So, for a fixed bounded subset $B$ of $C^\infty(\R)$, it follows for every $n\in\N$ that 
\[
\sup_{f\in B}|\langle (\cC_T)_{[n]}(S),f\rangle|\leq C ||(\star T)_{[n]}||'_{K,m}\sup_{f\in B}\sup_{z\in H-K}\sup_{0\leq i\leq l+m}|f^{(i)}(z)|.
\]
Since $\sup_{f\in B}\sup_{z\in H-K}\sup_{0\leq i\leq l+m}|f^{(i)}(z)|<\infty$ and $||(\star T)_{[n]}||'_{K,m}\to 0$ as $n\to\infty$, this implies that $\sup_{f\in B}|\langle (\cC_T)_{[n]}(S),f\rangle|\to 0$ as $n\to\infty$. But $B$ is arbitrary. Then we can conclude that $(\cC_T)_{[n]}\to 0$ in $\cE'(\R)$ as $n\to\infty$, i.e., $\cC_T$ is mean ergodic.

\begin{rem} Since $\cO_M(\R)$ is continuously included in $C^\infty(\R)$ and $\cE'(\R)$ is continuously included in $\cO'_C(\R)$,  the  results of this section clearly imply that:
	\begin{enumerate}
		\item Let  $h\in \cO_M(\R)$. If $M_h\colon \cS(\R)\to \cS(\R)$ is power bounded (mean ergodic, resp.), then  $M_h\colon C^\infty(\R)\to C^\infty(\R)$ is power bounded (mean ergodic, resp.), while
		\item Let $T\in \cE'(\R)$. If $\cC_T\colon \cE'(\R)\to \cE'(\R)$ is  power bounded (mean ergodic, resp.), then   $\cC_T\colon \cS'(\R)\to \cS'(\R)$ is  power bounded (mean ergodic, resp.).
			\end{enumerate}
	We point out that  the proof of part (1) is also given in the course of  the proof of Theorems \ref{T.PowerB} and \ref{T.MeanERg}.
\end{rem}
\end{proof}

\section{Appendix}

We  establish here some general results on the spectrum of continuous linear operators acting on  Fr\'echet spaces.

\begin{prop}\label{P.limite} Let $E$ be a Fr\'echet space and $T\in \cL(E)$. If there exists $\lim_{n\to\infty}\frac{T^n}{n}=0$ in $\cL_s(E)$, then $\sigma(T)\subseteq \overline{\mathbb D}$.
\end{prop}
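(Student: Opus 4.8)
The plan is to prove the contrapositive inclusion directly: I will show that every $\lambda\in\C$ with $|\lambda|>1$ lies in $\rho(T)$, by constructing the resolvent $R(\lambda,T)$ as an absolutely convergent Neumann series $\sum_{n=0}^\infty T^n/\lambda^{n+1}$. The crucial preliminary is to convert the hypothesis, which is only a statement of \emph{pointwise} convergence, into a uniform growth estimate on the iterates of $T$. Since $\frac{T^n}{n}\to 0$ in $\cL_s(E)$, for each $x\in E$ the sequence $\{\frac{T^nx}{n}\}_{n\in\N}$ is convergent and hence bounded in $E$; thus the family $\{\frac{T^n}{n}\}_{n\in\N}$ is pointwise bounded in $\cL(E)$. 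As $E$ is a Fr\'echet space, it is barrelled, so the uniform boundedness principle yields that $\{\frac{T^n}{n}\}_{n\in\N}$ is equicontinuous. Concretely, for every continuous seminorm $p$ on $E$ there exist a continuous seminorm $q$ on $E$ and a constant $C>0$ such that
\[
p(T^n x)\leq C\,n\,q(x),\quad n\in\N,\ x\in E.
\]

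Next I would fix $\lambda\in\C$ with $|\lambda|>1$ and analyse the series $\sum_{n=0}^\infty \frac{T^nx}{\lambda^{n+1}}$. By the estimate above, for every continuous seminorm $p$ one has $p\!\left(\frac{T^nx}{\lambda^{n+1}}\right)\leq \frac{C\,n}{|\lambda|^{n+1}}q(x)$, and since $|\lambda|>1$ the numerical series $M_\lambda:=\sum_{n=0}^\infty \frac{n}{|\lambda|^{n+1}}$ converges. Hence $\sum_{n=0}^\infty \frac{T^nx}{\lambda^{n+1}}$ is absolutely convergent with respect to each continuous seminorm, and by completeness of $E$ it converges in $E$. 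I would then define $Rx:=\sum_{n=0}^\infty \frac{T^nx}{\lambda^{n+1}}$; the same bound gives $p(Rx)\leq C\,M_\lambda\,q(x)$, so $R\in\cL(E)$.

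Finally I would verify that $R=R(\lambda,T)$ by a telescoping computation, justified by the continuity of $T$ and the convergence just established:
\[
(\lambda I-T)R=\sum_{n=0}^\infty \frac{T^n}{\lambda^n}-\sum_{n=0}^\infty \frac{T^{n+1}}{\lambda^{n+1}}=I,
\]
and likewise $R(\lambda I-T)=I$. Therefore $\lambda I-T$ is bijective with continuous inverse $R$, i.e.\ $\lambda\in\rho(T)$. Since $\lambda$ was an arbitrary point with $|\lambda|>1$, this gives $\{\lambda\in\C\colon|\lambda|>1\}\subseteq\rho(T)$, equivalently $\sigma(T)\subseteq\ov{\D}$.

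The main obstacle is precisely the first step: the hypothesis provides only strong (pointwise) convergence $\frac{T^n}{n}\to 0$, which is too weak to estimate the Neumann series term by term. The resolution is to invoke barrelledness of the Fr\'echet space together with the uniform boundedness principle, upgrading pointwise boundedness of $\{\frac{T^n}{n}\}_{n\in\N}$ to equicontinuity and thereby to the polynomial bound $p(T^nx)\leq C\,n\,q(x)$. Once this uniform estimate is available, the convergence of the series and the telescoping verification of the resolvent identity are routine.
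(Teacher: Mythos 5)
Your proof is correct, and its skeleton is the same as the paper's: both upgrade the pointwise hypothesis to equicontinuity of $\{\frac{T^n}{n}\}_{n\in\N}$ via barrelledness of the Fr\'echet space (Banach--Steinhaus), and both then show that every $\lambda$ with $|\lambda|>1$ lies in $\rho(T)$ by summing a Neumann series. The technical execution differs in the middle, though. The paper never uses the two-seminorm estimate $p(T^nx)\le C\,n\,q(x)$ directly; instead it first replaces the given seminorms by $q_j(x):=\max\{p_j(x),\sup_{n}p_j(\frac{T^nx}{n})\}$, obtaining a growth bound with the \emph{same} seminorm on both sides, $q_j(T^nx)\le 2nq_j(x)$, from which it extracts a spectral-radius-type estimate $\limsup_n\sqrt[n]{\sup_{q_j(x)\le 1}q_j(T^nx)}\le 1$ before summing the tail of the series. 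Your observation is that this renorming is unnecessary here: each term of $\sum_n \frac{T^nx}{\lambda^{n+1}}$ needs the growth bound applied only once (never iterated), so the weaker estimate with $q\ne p$ already gives absolute convergence with respect to every continuous seminorm, and completeness of $E$ plus continuity of $T$ do the rest. This makes your argument shorter and more elementary; what the paper's construction buys is a genuine operator-norm bound relative to a single fixed seminorm, a device of independent use that the paper recycles later (compare the seminorms satisfying \eqref{eq.EquiPB} introduced before Lemma \ref{L.Modul} for power bounded operators). One small bookkeeping point in your write-up: the bound $p(T^nx)\le C\,n\,q(x)$ comes from equicontinuity of $\{\frac{T^n}{n}\}_{n\ge 1}$ and is vacuous (indeed false) for $n=0$, so the term $\frac{x}{\lambda}$ of the series must be handled separately, e.g.\ by replacing $q$ with $\max\{p,q\}$ and $C$ with $\max\{C,1\}$; this is immaterial to the argument.
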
 

\begin{proof} The assumption  $\tau_s$-$\lim_{n\to\infty}\frac{T^n}{n}=0$ implies that the set $\{\frac{T^n}{n}\colon n\in\N\}$ is equicontinuous in $E$. If $\{p_j\}_{j\in\N}$ is an increasing sequence of continuous seminorms generating the lc-topology of $E$, then for each $j\in\N$ there exist $j'\in\N$ with $j'\geq j$ and $c_j>0$ such that
	\begin{equation}\label{eq.equic}
	p_j\left(\frac{T^nx}{n}\right)\leq c_jp_{j'}(x)
	\end{equation}
	for every $x\in E$ and $n\in\N$. 
	
	For every  $j\in\N$ and $x\in E$ we define 
	\begin{equation}
	q_j(x):=\max\left\{p_j(x), \sup_{n\in\N}p_j\left(\frac{T^n x}{n}\right)\right\}. 
	\end{equation}
	Thus, each $q_j$ is a seminorm on $E$ and $q_j\leq q_{j+1}$ for every $j\in\N$, as it is easy to show. On the other hand,  from  \eqref{eq.equic} it follows that 
	\begin{equation}\label{eq.ineq}
	p_j(x)\leq q_j(x)\leq c_jp_{j'}(x)
	\end{equation}
	for every $j\in\N$ and $x\in E$. Therefore, $\{q_j\}_{j\in\N}$ is an increasing sequence of continuous seminorms generating the lc-topology of $E$. Moreover,  we have for every  $j, n\in\N$ and $x\in E$ that
	\begin{equation}\label{eq.equic1}
	q_j\left(\frac{T^nx}{n}\right)=\max\left\{p_j\left(\frac{T^nx}{n}\right),\sup_{m\in\N}\left(\frac{T^{n+m}x}{nm}\right)\right\}\leq 2q_j(x),
	\end{equation}
	as $2nm\geq n+m$ for all $n,m\in\N$. This yields for every  $j, n\in\N$ and $x\in E$ that
	\[
	q_j(T^nx)\leq 2n q_j(x)
	\]
	and hence,
	\[
	\sup_{q_j(x)\leq 1}q_j(T^nx)\leq 2n. 
	\]
	Letting $n\to\infty$, it follows for every $j\in\N$ that 
	\begin{equation}\label{eq.radius}
	\limsup_{n\to\infty}\sqrt[n]{\sup_{q_j(x)\leq 1}q_j(T^nx)}\leq \lim_{n\to\infty}\sqrt[n]{2n}=1.
	\end{equation}
	
	Now, we fix $\lambda\in \C$ with $|\lambda|>1$ and choose $0<c<1$ so that $c|\lambda|>1$.  Consequently, for a fixed $j\in\N$, we have by \eqref{eq.radius} that 
	\[
	\limsup_{n\to\infty}\sqrt[n]{\sup_{q_j(x)\leq 1}q_j(T^nx)}\leq 1 <c|\lambda|.
	\]
	So, there exists $n_0\in\N$ such that for every $x\in X$ and $n\geq n_0$ we have
	\[
	q_j(T^nx)<c^n|\lambda|^nq_j(x).
	\]
	This implies for every $x\in\N$ and $n\geq n_0$  that
	\[
	q_j\left(\sum_{m=n}^\infty\frac{T^mx}{\lambda^m}\right)\leq \left(\sum_{m=n}^\infty c^m\right)q_j(x),
	\]
	where $\sum_{m=n}^\infty c^m\to 0$ for $n\to\infty$ as $0<c<1$. Accordingly, as $j\in\N$ is arbitrary, the series $\sum_{n=0}^\infty\frac{T^n}{\lambda^n}$ is convergent in $\cL_s(E)$ and so, the operator $R_\lambda:=\frac{1}{\lambda}\sum_{n=0}^\infty\frac{T^n}{\lambda^n}\in\ \cL(E)$. In particular, $R_\lambda (\lambda I-T)=(\lambda I -T)R_\lambda=I$. Thus,  $\lambda\in \rho(T)$ and $R(\lambda,T)=R_\lambda$.
\end{proof}

\begin{cor}\label{C.PowerBounded} Let $E$ be a Fr\'echet space and $T\in \cL(E)$. If $T$ is power bounded, then $\sigma(T)\subseteq \overline{\mathbb D}$.
\end{cor}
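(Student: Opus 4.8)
The plan is to deduce this corollary directly from Proposition \ref{P.limite} by verifying its single hypothesis, namely that $\frac{T^n}{n}\to 0$ in $\cL_s(E)$. First I would unwind the definition of power boundedness: by hypothesis $\{T^n\}_{n\in\N}$ is an equicontinuous subset of $\cL(E)$. Fixing an increasing sequence $\{p_j\}_{j\in\N}$ of continuous seminorms generating the lc-topology of $E$, equicontinuity means that for every $j\in\N$ there exist an index $j'\geq j$ and a constant $c_j>0$ such that
\[
p_j(T^n x)\leq c_j\, p_{j'}(x)
\]
for all $x\in E$ and all $n\in\N$.

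The key step is then the elementary observation that this uniform bound forces $\frac{T^n}{n}\to 0$ in the strong operator topology. Indeed, fixing $x\in E$ and $j\in\N$, one has
\[
p_j\left(\frac{T^n x}{n}\right)=\frac{1}{n}\,p_j(T^n x)\leq \frac{c_j}{n}\,p_{j'}(x)\longrightarrow 0
\]
as $n\to\infty$. Since $j\in\N$ is arbitrary, this shows that $\frac{T^n x}{n}\to 0$ in $E$ for every $x\in E$, that is, $\lim_{n\to\infty}\frac{T^n}{n}=0$ in $\cL_s(E)$.

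Having verified the hypothesis, I would simply invoke Proposition \ref{P.limite} to conclude that $\sigma(T)\subseteq \ov{\D}$. There is essentially no obstacle here: the entire content of the argument is the passage from the equicontinuity estimate $p_j(T^n x)\leq c_j p_{j'}(x)$ to the vanishing of $\frac{1}{n}p_j(T^n x)$, after which Proposition \ref{P.limite} carries out the substantive work of constructing the resolvent for $|\lambda|>1$.
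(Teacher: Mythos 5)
Your proof is correct and follows exactly the paper's route: the paper also deduces the corollary by observing that power boundedness implies $\lim_{n\to\infty}\frac{T^n}{n}=0$ in $\cL_s(E)$ and then invoking Proposition \ref{P.limite}. The only difference is that you spell out the equicontinuity estimate $p_j(T^nx)\leq c_j p_{j'}(x)$ explicitly, whereas the paper treats this implication as immediate.
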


\begin{proof} The assumption on $T$ clearly implies that there exists $\lim_{n\to\infty}\frac{T^n}{n}=0$ in $\cL_s(E)$. So, the result follows from Proposition \ref{P.limite}.
\end{proof}

\begin{rem} Let $E$ be a sequentially complete barrelled lcHs and $T\in \cL(E)$. If there exists $\lim_{n\to\infty}\frac{T^n}{n}=$ in $\cL_s(E)$,  arguing as in the proof of Proposition \ref{P.limite} we can show that $\sigma(T)\subseteq \overline{\D}$. 
\end{rem}

The next aim is to extend in the setting of separable Fr\'echet spaces a result of Jamison \cite{Jamison} about the size of $\sigma_p(T)\cap \T$ with $T$ a  power bounded operator. In order to do this, we observe that if $T$ is a power bounded operator acting on a Fr\'echet space $E$, then there exists an increasing sequence $\{q_j\}_{j\in\N}$ of continuous seminorms generating the lc-topology of $E$ such that for every $j,n\in\N$ and $x\in E$ we have
\begin{equation}\label{eq.EquiPB}
q_j(T^n x)\leq q_j(x).
\end{equation}
Indeed, if $\{p_j\}_{j\in\N}$ is an increasing sequence of continuous seminorms generating the lc-topology of $E$, then for each $j\in\N$ there exist $j'\in\N$ with $j'\geq j$ and $c_j>0$ such that
\[
p_j(T^nx)\leq c_jp_{j'}(x)
\]
for every $x\in E$ and $n\in\N$. If we set
\[
q_j(x):=\sup_{n\in\N_0}p_j(T^nx)
\]
for every $j\in\N$ and $x\in E$, then $\{q_j\}_{j\in\N}$ is an increasing sequence of  continuous seminorms generating the lc-topology of $E$ for which  \eqref{eq.EquiPB} is satisfied. The proof is achieved by arguing as in the proof of Proposition \ref{P.limite}.

\begin{lem}\label{L.Modul} Let $E$ be a Fr\'echet space and $T\in\cL(E)$ be a power bounded operator satisfying \eqref{eq.EquiPB}.  Let $\lambda_1,\lambda_2\in \sigma_p(T)\cap\T$ be independent. Then, if $x_1$ and $x_2$ are eigenvectors for $\lambda_1$ and $\lambda_2$ respectively, with $q_j(x_1)=q_j(x_2)=1$ for some $j\in\N$, we have $q_j(x_1-x_2)\geq 1$.
\end{lem}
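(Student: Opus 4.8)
The plan is to exploit the contractivity condition \eqref{eq.EquiPB} along the orbit of the difference $x_1-x_2$ and then run a Ces\`aro averaging argument that recovers $q_j(x_1)$. The point is that independence of $\lambda_1,\lambda_2$ forces in particular $\lambda_1\not=\lambda_2$, and this alone suffices to drive the averaging.

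First I would record that, since $Tx_i=\lambda_i x_i$, we have $T^n(x_1-x_2)=\lambda_1^n x_1-\lambda_2^n x_2$ for every $n\in\N$. Applying \eqref{eq.EquiPB} to the vector $x_1-x_2$ then gives $q_j(\lambda_1^n x_1-\lambda_2^n x_2)\leq q_j(x_1-x_2)$ for all $n$. Because $|\lambda_1|=1$, I can factor $\lambda_1^n$ out of the seminorm and obtain, setting $\mu:=\lambda_2/\lambda_1$, the uniform bound $q_j(x_1-\mu^n x_2)\leq q_j(x_1-x_2)$ for every $n\in\N$.

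Next I would average this inequality over $n=1,\ldots,N$. By the triangle inequality and homogeneity of $q_j$, the averaged left-hand side dominates $q_j(x_1-c_N x_2)$, where $c_N:=\frac{1}{N}\sum_{n=1}^N\mu^n$, while the right-hand side remains $q_j(x_1-x_2)$. Since the eigenvalues are independent they are distinct, so $\mu\not=1$ while $|\mu|=1$; hence $c_N=\frac{\mu(\mu^N-1)}{N(\mu-1)}$ satisfies $|c_N|\leq\frac{2}{N|\mu-1|}\to 0$ as $N\to\infty$. Consequently $x_1-c_N x_2\to x_1$ in $E$, and continuity of the seminorm $q_j$ gives $q_j(x_1-c_N x_2)\to q_j(x_1)=1$. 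Passing to the limit in $q_j(x_1-c_N x_2)\leq q_j(x_1-x_2)$ then yields $q_j(x_1-x_2)\geq 1$, as required.

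The only genuine subtlety is the passage from the single-index estimate $q_j(x_1-\mu^n x_2)\leq q_j(x_1-x_2)$ to the averaged one: it is the decay of the Ces\`aro means $c_N$ to $0$, valid precisely because $\mu\not=1$, that lets the averaging annihilate the $x_2$-contribution and isolate $q_j(x_1)$. Everything else is the triangle inequality, scalar homogeneity, and continuity of $q_j$.
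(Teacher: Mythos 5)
Your proof is correct, but it takes a genuinely different route from the paper's. The paper reduces the statement to the Banach space setting: it passes to the quotient $E/\ker q_j$ with the canonical quotient norm, notes that \eqref{eq.EquiPB} makes the induced operator a contraction there, extends it to the completion, and then invokes Jamison's Lemma 2 from \cite{Jamison} to get the separation, which is pulled back to $E$. You instead argue directly in $E$: from $q_j\bigl(T^n(x_1-x_2)\bigr)\leq q_j(x_1-x_2)$ and absolute homogeneity you obtain $q_j(x_1-\mu^n x_2)\leq q_j(x_1-x_2)$ with $\mu=\lambda_2/\lambda_1$, and then Ces\`aro averaging annihilates the $x_2$-contribution because $|c_N|\leq 2/(N|\mu-1|)\to 0$; the limit $1-|c_N|\leq q_j(x_1-x_2)$ gives the claim. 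Two remarks on what each approach buys. The paper's route is short because it outsources the analytic work to a known Banach-space lemma; your route is self-contained and elementary, avoiding the quotient/completion machinery and the citation, and in particular it replaces Jamison's use of Kronecker-type density of $\{\mu^n\}$ in $\T$ (which is exactly where independence, i.e.\ $\mu$ not a root of unity, is needed) by a geometric-series estimate. As a consequence you actually prove a stronger statement: only $\lambda_1\neq\lambda_2$ (equivalently $\mu\neq 1$) is used, not independence. This sharpening would even streamline Proposition \ref{P.Modulo}, since eigenvectors attached to any two \emph{distinct} unimodular eigenvalues are then $1$-separated in $q_j$, so there is no need to extract from $\Gamma$ an uncountable subset $\Lambda$ of pairwise independent elements.
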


\begin{proof} We consider the quotient space $X/\ker p_j$ and endow it with the canonical quotient norm $\hat{p}_j$ defined by
	\[
	\hat{p}_j(Q_jx):=\inf\{p_j(y)\colon x-y\in\Ker q_j\},
	\]
	where $Q_j\colon E\to \frac{E}{\Ker q_j}$ denotes the canonical quotient map. Then $\left(\frac{E}{\Ker q_j},\hat{p}_j \right)$ is a normed space. The operator $T$ induces a continuous linear operator $\hat{T}_j$ acting on $\frac{E}{\Ker q_j}$ via \eqref{eq.EquiPB} such that 
	\[
	Q_j\circ T=T_j\circ Q_j \ \ {\rm and } \ \ \hat{p}_j(T_j\hat{x})\leq \hat{p}_j(\hat{x}) \ \ {\rm for\ all} \ \hat{x}\in \frac{E}{\Ker q_j}.
	\]
	Therefore, $T_j$ can be continuously extended on the completion $E_j$ of $\left(\frac{E}{\Ker q_j},\hat{p}_j \right)$, denoted again by $T_j$, for which $Q_j\circ T=T_j\circ Q_j$ continues to hold.
	
	We now observe that
	\[
	T_j(Q_jx_i)=Q_j(Tx_i)=Q_j(\lambda_ix_i)=\lambda_i Q_j(x_i), \quad i=1,2, 
	\]
	and that, setting $\hat{x}_i=Q_j(x_i)$ for $i=1,2$, we have
	\[
	\hat{q_j}(\hat{x}_i)=1,\quad i=1,2.
	\]
	By \cite[Lemma 2]{Jamison} we may conclude that $\hat{q_j}(\hat{x}_1-\hat{x}_2)\geq 1$ and hence, 
	\[
	1\leq \hat{q_j}(\hat{x}_1-\hat{x}_2)\leq q_j(x_1-x_2).
	\]
\end{proof}

\begin{prop}\label{P.Modulo} Let $E$ be a separable  Fr\'echet space and $T\in\cL(E)$ be a power bounded operator satisfying \eqref{eq.EquiPB}. Then $\sigma_p(T)\cap \T$ is at most countable.
\end{prop}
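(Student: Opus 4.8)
The plan is to upgrade the quantitative separation furnished by Lemma \ref{L.Modul} into a counting argument, using the separability of $E$. Throughout I fix an increasing sequence $\{q_j\}_{j\in\N}$ of continuous seminorms generating the topology of $E$ and satisfying \eqref{eq.EquiPB}, exactly as constructed in the paragraph preceding Lemma \ref{L.Modul}. The whole difficulty is that each $q_j$ is only a seminorm, so the natural separation lives in a pseudometric (essentially a quotient) setting; the cleanest way around this is to stratify the eigenvalues according to the first seminorm that ``sees'' a chosen eigenvector.

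First I would attach to every $\lambda\in\sigma_p(T)\cap\T$ a normalized eigenvector. Pick any $x_\lambda\neq 0$ with $Tx_\lambda=\lambda x_\lambda$. Since $E$ is Hausdorff and the $q_j$ generate its topology, there is a least index $j(\lambda)\in\N$ with $q_{j(\lambda)}(x_\lambda)>0$; after rescaling I may assume $q_{j(\lambda)}(x_\lambda)=1$. Setting $A_j:=\{\lambda\in\sigma_p(T)\cap\T\colon j(\lambda)=j\}$ gives the partition $\sigma_p(T)\cap\T=\bigcup_{j\in\N}A_j$, so it suffices to prove that each $A_j$ is at most countable. The point of this stratification is that for two eigenvalues lying in the \emph{same} block $A_j$, both chosen eigenvectors are unit vectors for the \emph{same} seminorm $q_j$, which is precisely the hypothesis needed to invoke the lemma.

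Next, fix $j$ and take distinct $\lambda,\mu\in A_j$, with eigenvectors normalized so that $q_j(x_\lambda)=q_j(x_\mu)=1$. Eigenvectors associated with distinct eigenvalues are independent, so Lemma \ref{L.Modul}, applied with this common index $j$, yields $q_j(x_\lambda-x_\mu)\geq 1$. Thus $\{x_\lambda\colon\lambda\in A_j\}$ is a family of vectors pairwise separated by at least $1$ with respect to the pseudometric $d_j(x,y):=q_j(x-y)$, and in particular $\lambda\mapsto x_\lambda$ is injective on $A_j$.

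Finally I would invoke separability to force $A_j$ to be countable. Let $D\subseteq E$ be a countable dense set for the Fréchet topology of $E$; since the topology induced by the single seminorm $q_j$ is coarser, $D$ remains dense for $d_j$. For each $\lambda\in A_j$ the $q_j$-ball $\{y\in E\colon q_j(y-x_\lambda)<1/3\}$ is nonempty, hence contains some $d_\lambda\in D$. If $\lambda\neq\mu$ in $A_j$ had $d_\lambda=d_\mu$, the triangle inequality would give $q_j(x_\lambda-x_\mu)\leq q_j(x_\lambda-d_\lambda)+q_j(d_\mu-x_\mu)<2/3<1$, contradicting the previous step; hence $\lambda\mapsto d_\lambda$ is an injection of $A_j$ into $D$, so $A_j$ is at most countable. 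Consequently $\sigma_p(T)\cap\T=\bigcup_{j\in\N}A_j$ is a countable union of at most countable sets, and therefore at most countable. The only genuinely delicate step is the second one: one must arrange, via the index $j(\lambda)$, that both eigenvectors are simultaneously $q_j$-unit vectors so that Lemma \ref{L.Modul} applies; once this is secured, the descent of separability to the pseudometric $d_j$ and the disjoint-balls counting are routine.
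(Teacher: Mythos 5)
Your stratification by the least seminorm index that ``sees'' the eigenvector, and the counting argument via a countable dense set, are both sound --- indeed they amount to the same separability scheme as the paper's proof. The genuine gap is in the middle step, where you invoke Lemma \ref{L.Modul} for arbitrary pairs of \emph{distinct} eigenvalues. The hypothesis of that lemma is that the two \emph{eigenvalues} $\lambda_1,\lambda_2\in\sigma_p(T)\cap\T$ are \emph{independent}: its proof passes to the completion of $E/\Ker q_j$ and quotes \cite[Lemma 2]{Jamison}, where independence is a property of the pair of points of $\T$ (roughly, $\lambda_1/\lambda_2$ is not a root of unity), strictly stronger than being distinct. Your sentence ``eigenvectors associated with distinct eigenvalues are independent'' conflates this with \emph{linear} independence of the eigenvectors, which is true but is not the hypothesis of the lemma. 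For a pair such as $\lambda=1$, $\mu=-1$ (distinct but dependent) the lemma as stated gives nothing, so the claimed inequality $q_j(x_\lambda-x_\mu)\geq 1$ for all distinct $\lambda,\mu\in A_j$ is unjustified, and the injection of $A_j$ into the dense set collapses.

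The missing ingredient is precisely the reduction that the paper performs first: arguing by contradiction, if $\sigma_p(T)\cap\T$ were uncountable, it would contain an uncountable subset $\Lambda$ whose elements are \emph{pairwise independent} (the relation ``$\lambda/\mu$ is a root of unity'' partitions $\T$ into countable equivalence classes, so uncountably many classes occur and one picks representatives); only then is Lemma \ref{L.Modul} applied, to pairs from $\Lambda$, before concluding with the separation/separability argument. Your proof is repaired by inserting this extraction step and running your stratification on $\Lambda$ rather than on all of $\sigma_p(T)\cap\T$. (Alternatively, one could prove a stronger lemma valid for all distinct unimodular eigenvalues: if $\mu/\lambda$ is a primitive $m$-th root of unity $\omega$, then $T^n(x_\lambda-x_\mu)=\lambda^n\left(x_\lambda-\omega^n x_\mu\right)$, and summing over $n=0,\dots,m-1$ together with \eqref{eq.EquiPB} gives $m\,q_j(x_\lambda)\leq\sum_{n=0}^{m-1}q_j\left(T^n(x_\lambda-x_\mu)\right)\leq m\,q_j(x_\lambda-x_\mu)$, hence $q_j(x_\lambda-x_\mu)\geq 1$. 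But this is a different, stronger lemma, which you neither state nor prove, so as written your argument does not go through.)
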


\begin{proof} We set $\Gamma:=\sigma_p(T)\cap \T$ and suppose  that $\Gamma$ is uncountable. By Lemma \ref{L.Modul} $\Gamma$ contains an uncountable set $\Lambda$ such that any two distint elements of $\Lambda$ are independent. For each $\lambda\in \Lambda$ we select an eigenvector $x_\lambda$ corresponding to the eigenvalue $\lambda$ and we set $D:=\{x_\lambda\colon \lambda\in \Lambda\}$.  Of course, $D$ is uncountable. If for each $j\in\N$ we define
	\[
	D_j:=\{x\in D\colon q_j(x)\not =0\},
	\]
	then $D=\cup_{j\in\N}D_j$. Moreover, we may suppose without loss of generality for every $j\in\N$  that each element of $D_j$ satisfies $q_j(x)=1$.
	
	Since $D$ is uncountable, there exists $j_0\in\N$ such that $D_{j_0}$ is uncountable. We now set $U_{j_0}:=\{x\in E\colon q_{j_0}(x)<1\}$ and $U_{x,j_0}(r):=x+rU_{j_0}$ for $x\in E$ and $r>0$. Then any two distint elements of $\{U_{x,j_0}(\frac{1}{2})\colon x\in D_{j_0}\}$ are disjoint sets by Lemma \ref{L.Modul}. This clearly contradicts the separability of $E$. Thus $\Gamma$ is at most countable. 
\end{proof}

\begin{cor}\label{C.Modulo} 
	Let $E$ be a separable  Fr\'echet space and $T\in\cL(E)$. If $\sigma_p(T)\cap \T$ is uncountable, then $T$ cannot be power bounded.
\end{cor}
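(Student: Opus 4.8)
The plan is to argue by contraposition, reducing the corollary directly to Proposition \ref{P.Modulo}. Suppose, contrary to the conclusion, that $T$ is power bounded. The only gap between the hypotheses of the corollary and those of Proposition \ref{P.Modulo} is that the latter additionally requires the operator to satisfy the estimate \eqref{eq.EquiPB} with respect to some generating sequence of seminorms. But this is precisely what the remark immediately preceding Lemma \ref{L.Modul} supplies: whenever $T$ is power bounded on a Fr\'echet space $E$, one can pass from an arbitrary increasing fundamental sequence $\{p_j\}_{j\in\N}$ of continuous seminorms to the new sequence
\[
q_j(x):=\sup_{n\in\N_0}p_j(T^nx),\quad j\in\N,\ x\in E,
\]
which is again an increasing fundamental sequence of continuous seminorms generating the lc-topology of $E$, and which by construction satisfies $q_j(T^nx)\leq q_j(x)$ for all $j,n\in\N$ and $x\in E$, i.e. \eqref{eq.EquiPB}.

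First I would invoke this remark to fix such a sequence $\{q_j\}_{j\in\N}$. Then the power bounded operator $T$ meets all the hypotheses of Proposition \ref{P.Modulo}, so we may apply that proposition to conclude that $\sigma_p(T)\cap\T$ is at most countable. This contradicts the standing assumption of the corollary that $\sigma_p(T)\cap\T$ is uncountable. Hence $T$ cannot be power bounded, which is exactly the assertion.

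There is essentially no obstacle here: the proof is purely the contrapositive of Proposition \ref{P.Modulo}, and the sole technical point—verifying that power boundedness yields seminorms satisfying \eqref{eq.EquiPB}—has already been dispatched in the preceding remark. The substantive mathematical work (the geometric separation estimate of Lemma \ref{L.Modul} and the separability argument of Proposition \ref{P.Modulo}) is therefore entirely contained in the results assumed to precede this corollary.

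\begin{proof} Suppose, to the contrary, that $T$ is power bounded. As noted in the remark preceding Lemma \ref{L.Modul}, there exists an increasing sequence $\{q_j\}_{j\in\N}$ of continuous seminorms generating the lc-topology of $E$ such that \eqref{eq.EquiPB} holds. Thus $T$ satisfies all the hypotheses of Proposition \ref{P.Modulo}, whence $\sigma_p(T)\cap\T$ is at most countable. This contradicts the assumption that $\sigma_p(T)\cap\T$ is uncountable. Therefore $T$ cannot be power bounded.
\end{proof}
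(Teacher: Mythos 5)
Your proof is correct and is essentially identical to the paper's: both assume power boundedness, invoke the remark preceding Lemma \ref{L.Modul} to obtain a generating sequence of seminorms satisfying \eqref{eq.EquiPB}, and then apply Proposition \ref{P.Modulo} to reach a contradiction with the uncountability of $\sigma_p(T)\cap\T$.
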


\begin{proof} We suppose that  $T$ is a power bounded operator. Then there exists an increasing sequence $\{q_j\}_{j\in\N}$ of continuous seminors generating the lc-topology of $E$ such that $T$ satisfies  \eqref{eq.EquiPB} (see the comments before Lemma \ref{L.Modul}). So, we can apply Proposition \ref{P.Modulo} to conclude that  $\sigma_p(T)\cap \T$ is at most countable. This is a contradiction and so the proof is complete. \end{proof}


\end{document}